\newtheorem{theorem}{Theorem}[section]
\newtheorem{lemma}[theorem]{Lemma}
\newtheorem{proposition}[theorem]{Proposition}
\newtheorem{corollary}[theorem]{Corollary}
\newtheorem{question}[theorem]{Question}
\theoremstyle{definition}
\newtheorem{definition}[theorem]{Definition}
\newtheorem{remark}[theorem]{Remark}
\numberwithin{equation}{section}
\newcommand{\N}{\mathbb{N}} 
\newcommand{\Q}{\mathbb{Q}} 
\newcommand{\R}{\mathbb{R}} 
\newcommand{\C}{\mathbb{C}} 
\newcommand{\D}{\mathbb{D}} 
\newcommand{\e}{\varepsilon}
\def\Re{\hbox{\rm Re~}}
\title[Dirichlet approximation and universal      Dirichlet series]
{Dirichlet approximation and universal Dirichlet series}
\author[R. M. Aron]{Richard M. Aron}
\address{Department of Mathematical Sciences, Kent State University, Kent OH 44242, USA} \email{aron@math.kent.edu}
\author[F. Bayart]{Fr\'{e}d\'{e}ric Bayart}
\address{ Laboratoire de Math\'ematiques, Universit\'e Blaise Pascal, BP 10448, F-63000 Clermont-Ferrand, France} \email{Frederic.Bayart@math.univ-bpclermont.fr}
\author[Paul  Gauthier]{Paul M. Gauthier}
\address{D\'epartement de math\'ematiques et de statistique, Universit\'e de Montr\'eal, Montr\'eal QC, H3C3J7, Canada}
\email{gauthier@dms.umontreal.ca}
\author[Manuel Maestre]{Manuel Maestre}
\address{Departamento de An\'{a}lisis Matem\'{a}tico,
Universidad de Valencia, Doctor Moliner 50, 46100 Burjasot
(Valencia), Spain} \email{manuel.maestre@uv.es}
\author[Vassili Nestoridis]{Vassili Nestoridis}
\address{Department of Mathematics, University of Athens, 157 84 Panepistemiopolis, Athens, Greece} \email{vnestor@math.uoa.gr}
\thanks{
The first and fourth
authors were supported by MINECO MTM2014-57838-C2-2-P and  Prometeo II/2013/013; the third by NSERC and Entente France-Qu\'ebec. Partially supported also by the ``SQuaREs" program at the American Institute of Mathematics, Palo Alto and the ``Research in Pairs" program at the Mathematisches Forshungsinstitut, Oberwolfach.}
\subjclass[2010]{Primary 30K10;  Secondary 46G20, 30E10}
\date{}
\keywords{universal      series, Runge theorem}
\begin{document}

\begin{abstract}

We characterize the uniform limits of Dirichlet polynomials
on a right half plane. 
In the Dirichlet setting,   we find approximation results, with respect to the
Euclidean distance and {to} the chordal one as well, analogous to classical results of Runge,
Mergelyan and Vitushkin.
We also strengthen the
notion of universal      Dirichlet series.

 \end{abstract}

\maketitle

\section{Introduction}
\par

It is well known that the set of uniform limits of polynomials on the
closed unit disk $\overline \D$ is the disc algebra $A(\D),$ consisting of all continuous
functions $f: \overline \D\rightarrow \C,$ which are holomorphic in the open unit disk $\D.$ In \cite{MakNes},
for an arbitrary set $I,$ the set of uniform limits $A(\D^I)$ on $\D^I$  of
polynomials depending each one on a varying finite set of variables is
investigated. The space $A(\D^I)$ consists of all continuous functions $f:\overline \D^I\rightarrow \C$, where $\overline \D^I$ is endowed with the product topology, which are
separately holomorphic in $\D^I.$

The motivation of the present paper is to replace polynomials by Dirichlet
polynomials and to examine analogous approximation questions. First we
investigate the set of uniform limits of Dirichlet polynomials on the right
half plane $\C_+     =\{s\in \mathbb C: \Re s>0\}.$ This set coincides with the space of uniformly
continuous functions $f:\C_+      \rightarrow \C,$  representable by  convergent
Dirichlet series on $\C_+     .$ It is isomorphic to the space $A(\D^{\N_0})$ and to
the space of bounded uniformly continuous and complex Frechet
differentiable functions on the open unit ball of $c_0.$ This brings
together the results of [13] and [14] and it is the content of Section 2.

    Next we consider uniform approximation by Dirichlet
polynomials on compact subsets of $\C.$ For this, we need to
strengthen an approximation result from [3], valid for particular
compact sets said to be  "admissible". This is done in Section 3 and
automatically we strengthen the notion of universal      Dirichlet series (
[3],[5] ). It follows that the universal approximation by the partial sums of a
universal      Dirichlet series is valid on every compact set $K$ in $\C_{-}:=\{s
\in \mathbb C: \Re s \le 0 \}$  having connected complement, not only  on "
admissible " compact sets $K.$

   In Section 4, using the previous approximation result, we extend results of
Mergelyan, Runge and Vitushkin, replacing polynomials by
Dirichlet polynomials and rational functions by Dirichlet rational
functions. A Dirichlet polynomial has the form $P(s)=\sum_{j=1}^{n}a_jj^{-s}$   and a Dirichlet rational function has the form
$P_0(s)+P_1(1/(s-z_1))+...+P_n(1/(s-z_n)),$ where $P_j , j=0,1\ldots,n,$ are
Dirichlet polynomials . We also investigate the set of uniform limits of
Dirichlet polynomials on any straight line in $\C.$

In Section 5 we treat  analogous questions where the uniform
approximation is not meant with respect to the usual Euclidean distance on
$\C=\R^2$ but with respect to the chordal distance on $\C\cup \{\infty\}.$
This extends results of \cite{FraNePa} to the Dirichlet setting.


\section{Background review; Closure of Dirichlet polynomials on a half plane }
\par

To any Dirichlet series $D=\sum_{n=1}^{\infty}\frac{a_n}{n^s}$, one can associate the following abscissas:
$$\sigma_c(D)=\inf\{ \Re s: \ \sum_{n=1}^{\infty}\frac{a_n}{n^s}\ \text{is convergent}\}.$$
$$\sigma_b(D)=\inf\{ \sigma : \ \sum_{n=1}^{\infty}\frac{a_n}{n^s}\ \text{is bounded on } \ \Re s\geq \sigma\}.$$
$$\sigma_u(D)=\inf\{ \sigma : \ \sum_{n=1}^{\infty}\frac{a_n}{n^s}\ \text{is uniformly convergent on } \ \Re s\geq \sigma\}.$$
$$\sigma_a(D)=\inf\{ \Re s: \ \sum_{n=1}^{\infty}\frac{a_n}{n^s}\ \text{is absolutely convergent}\}.$$


It is known that  $$\sigma_c(D)\leq \sigma_b(D)= \sigma_u(D)\leq \sigma_a(D)\leq \sigma_c(D)+1.$$

{The proof of inequalities relating $\sigma_c(D), \sigma_u(D)$ and $\sigma_a(D)$ can be found in \cite[Section 4.1, pag. 98]{QuQu13} or in \cite{Boas}. The equality $\sigma_b(D)= \sigma_u(D)$ was obtained by Bohr, and a proof can be found in \cite[Theorem 6.2.3, pag. 145]{QuQu13}.}
\par

Recall that $\mathcal{H}_\infty$ is the Banach space of all Dirichlet series $\sum_{n=1}^{\infty}\frac{a_n}{n^s}$ that converges to a bounded function $D(s)$ on   $\C_+$ endowed with the supremum norm
$$\|D\|_\infty=\sup_{s \in  \C_+}\big|  \sum_{n=1}^{\infty}\frac{a_n}{n^s}\big|.$$
\begin{definition} Let us denote by $\mathcal{A}( \C_+)$ the set of all Dirichlet series  $D(s)=\sum_{n=1}^{\infty}\frac{a_n}{n^s}$ which are convergent on $\C_+$
and define a uniformly continuous function on that half plane.
\end{definition}

\begin{proposition}\label{AisclosedinHinfty}
  $\mathcal{A}( \C_+)$ is a closed subspace of $\mathcal{H}_\infty$.
\end{proposition}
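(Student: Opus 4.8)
The plan is to establish three separate facts: that every element of $\mathcal{A}(\C_+)$ actually lies in $\mathcal{H}_\infty$ (that is, is bounded), that $\mathcal{A}(\C_+)$ is a linear subspace, and that it is closed for the supremum norm. The only non-routine point is the first. Indeed, uniform continuity alone does not force boundedness on an unbounded domain (the function $\Re s$ is uniformly continuous and unbounded), so the Dirichlet structure must be exploited; once boundedness is in hand, the algebraic and topological assertions follow quickly.

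First I would fix $D=\sum_{n=1}^\infty a_n n^{-s}\in\mathcal{A}(\C_+)$ and prove it is bounded. Since $D$ converges on $\C_+$ we have $\sigma_c(D)\le 0$, hence $\sigma_a(D)\le\sigma_c(D)+1\le 1$, and so on the half plane $\Re s\ge 2$ the series converges absolutely with $|D(s)|\le M:=\sum_n|a_n|\,n^{-2}<\infty$. To control $D$ on the remaining strip $0<\Re s<2$ I would run a chaining argument anchored at $\Re s=2$: by uniform continuity choose $\delta\in(0,1)$ with $|D(s)-D(s')|<1$ whenever $|s-s'|<\delta$, set $N=\lceil 2/\delta\rceil$, and for a given $s=\sigma+it$ subdivide the horizontal segment from $s$ to $2+it$ into $N$ equal steps, each of length $(2-\sigma)/N<\delta$. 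All intermediate points have real part in $[\sigma,2]\subset(0,\infty)$, hence lie in $\C_+$, and telescoping gives $|D(s)|\le|D(2+it)|+N\le M+N$, a bound independent of $\sigma$ and $t$. Consequently $\|D\|_\infty\le M+N<\infty$, so $D\in\mathcal{H}_\infty$. This chaining estimate is the heart of the matter and the step I expect to be the main obstacle to phrase cleanly: the fixed anchor $\Re s=2$ and the uniform step count $N$ are exactly what make the bound uniform in the (unbounded) imaginary direction.

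It remains to verify the algebraic and topological statements. That $\mathcal{A}(\C_+)$ is a linear subspace is immediate: a linear combination of Dirichlet series convergent on $\C_+$ is again such a series, with the corresponding linear combination of coefficients, and converges pointwise on $\C_+$ to the combination of the limits; moreover a linear combination of uniformly continuous functions is uniformly continuous. For closedness, suppose $D_k\in\mathcal{A}(\C_+)$ and $\|D_k-D\|_\infty\to 0$; such a limit $D$ exists and lies in $\mathcal{H}_\infty$ because $\mathcal{H}_\infty$ is a Banach space. As an element of $\mathcal{H}_\infty$ the function $D$ is already a Dirichlet series convergent on $\C_+$, so only its uniform continuity needs to be checked. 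But convergence in the $\mathcal{H}_\infty$ norm is precisely uniform convergence on $\C_+$, and a uniform limit of uniformly continuous functions is uniformly continuous by the standard $\varepsilon/3$ argument. Hence $D\in\mathcal{A}(\C_+)$, and $\mathcal{A}(\C_+)$ is a closed subspace of $\mathcal{H}_\infty$.
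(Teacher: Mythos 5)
Your proposal is correct and follows essentially the same route as the paper: both reduce the problem to showing $\mathcal{A}(\C_+)\subset\mathcal{H}_\infty$, obtain a bound on a right half-plane from $\sigma_c(D)\le 0\Rightarrow\sigma_a(D)\le 1$, extend it to all of $\C_+$ by a chaining argument using uniform continuity (the paper steps by $1/k<\delta$ toward $\Re s\ge 1+1/k$, you subdivide the segment to the anchor $\Re s=2$ into $N$ steps of length below $\delta$), and conclude closedness from the fact that a uniform limit of uniformly continuous functions is uniformly continuous. The only cosmetic difference is that you spell out the linearity and the limiting argument that the paper dispatches in one sentence.
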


\begin{proof}
  Since the uniform limit of a sequence of uniformly continuous functions is uniformly continuous, it is enough to check that $\mathcal{A}( \C_+)$ is a  subset of $\mathcal{H}_\infty$.
  Let $D(s)=\sum_{n=1}^{\infty}\frac{a_n}{n^s} \in \mathcal{A}( \C_+)$.  By hypothesis there exists $\delta>0$ such that
  $|D(s_1)-D(s_2)|<1$ for every $s_1,s_2 \in \C_+$ such that $|s_1-s_2|<\delta.$
Let us take $k\in \mathbb{N}$ with $\frac{1}{k}<\delta$.
Since $\sigma_c(D)\leq 0$, we have   $\sigma_a(D)\leq 1$. Thus
  $$|D(s)|= \big|\sum_{n=1}^{\infty}\frac{a_n}{n^s}\big |\leq \sum_{n=1}^{\infty}\frac{|a_n|}{n^{1+\frac{1}{k}}}:=M<\infty,$$
  for every $s \in \C$ such that  $\Re s\geq 1+\frac{1}{k}$.
  On the other hand if $0<\Re s <1+\frac{1}{k}$, let $j_0$ be the smallest non-negative integer such that $\Re s+
\frac{j_0}{k}\geq 1+\frac{1}{k}$. Clearly $j_0\leq k+1$ and
$$
	|D(s)| \leq
	\sum_{j=0}^{j_0-1}\left|D\left(s+\frac{j}{k}\right)-D\left(s+\frac{j+1}{k}\right)\right|+
	\left|D\left(s+\frac{j_0}{k}\right)\right| \le j_0+M\leq k+1+M.
$$

\end{proof}

\begin{theorem}\label{CharactuniflimDirichletpoly} Given $f:\C_+\to\C$, the following are equivalent.
  \begin{enumerate}
    \item  $f$ is the uniform limit on $\C_+$ of a sequence of Dirichlet polynomials.
    \item  $f$ is represented by a Dirichlet series pointwise on $\C_+$ and $f$ is uniformly continuous on  $\C_+$.
  \end{enumerate}
\end{theorem}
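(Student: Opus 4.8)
The plan is to prove the two implications separately, leaning on Proposition~\ref{AisclosedinHinfty} for one direction and on a horizontal translation trick for the other.

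For $(1)\Rightarrow(2)$, I would first note that every Dirichlet polynomial $P(s)=\sum_{j=1}^n a_j j^{-s}$ lies in $\mathcal{A}(\C_+)$: it is bounded on $\C_+$ since $|j^{-s}|\le 1$ there, and it is Lipschitz, hence uniformly continuous, because $\frac{d}{ds}j^{-s}=-(\log j)\,j^{-s}$ stays bounded on $\C_+$. Now if $f$ is the uniform limit of a sequence $(P_m)$ of Dirichlet polynomials, then $(P_m)$ is Cauchy for the supremum norm over $\C_+$, i.e. Cauchy in $\mathcal{H}_\infty$; by completeness its $\mathcal{H}_\infty$-limit exists, and being the pointwise limit it must equal $f$, so $f\in\mathcal{H}_\infty$. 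Since each $P_m\in\mathcal{A}(\C_+)$ and $\mathcal{A}(\C_+)$ is closed in $\mathcal{H}_\infty$ by Proposition~\ref{AisclosedinHinfty}, we conclude $f\in\mathcal{A}(\C_+)$, which is exactly statement $(2)$.

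For the converse $(2)\Rightarrow(1)$, the naive idea of using the partial sums $\sum_{n\le N}a_n n^{-s}$ fails, since these need not converge uniformly up to the boundary line $\Re s=0$. The device I would use instead is the translate $f_\e(s):=f(s+\e)$ for $\e>0$, whose Dirichlet coefficients are $a_n n^{-\e}$. Because $f\in\mathcal{A}(\C_+)\subseteq\mathcal{H}_\infty$ is bounded on $\C_+$, we have $\sigma_b(f)\le 0$, hence $\sigma_u(f)=\sigma_b(f)\le 0$ by Bohr's equality recalled above. Translating strictly improves this: $\sigma_u(f_\e)=\sigma_u(f)-\e\le-\e<0$, so the partial sums $S_N(f_\e)(s)=\sum_{n=1}^N a_n n^{-\e} n^{-s}$, which are genuine Dirichlet polynomials, converge uniformly to $f_\e$ on the closed half plane $\Re s\ge 0$, in particular on $\C_+$.

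It then remains to send $\e\to 0^+$, and here the uniform continuity hypothesis does the essential work. Since $|(s+\e)-s|=\e$, uniform continuity of $f$ forces $\sup_{s\in\C_+}|f_\e(s)-f(s)|=\sup_{s\in\C_+}|f(s+\e)-f(s)|\to 0$ as $\e\to 0^+$. Combining the two estimates through an $\e/2$ argument, first choosing $\e$ so that $\|f_\e-f\|_\infty$ is small and then $N$ so that $\|S_N(f_\e)-f_\e\|_\infty$ is small, produces Dirichlet polynomials $S_N(f_\e)$ approximating $f$ uniformly on $\C_+$, which gives $(1)$. I expect the only genuinely delicate point to be the passage from boundedness of $f$ to $\sigma_u(f)\le 0$, as this is what makes the translated series converge uniformly all the way down to $\Re s=0$; everything else is a routine interplay of Bohr's theorem with uniform continuity.
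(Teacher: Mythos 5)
Your proof is correct and takes essentially the same route as the paper: the direction $(1)\Rightarrow(2)$ rests on completeness of $\mathcal{H}_\infty$ together with Proposition~\ref{AisclosedinHinfty}, and $(2)\Rightarrow(1)$ uses Bohr's equality $\sigma_u(f)=\sigma_b(f)\le 0$ combined with the horizontal translate $f(s+\e)$, whose partial sums are Dirichlet polynomials, and uniform continuity to let $\e\to 0^+$. This is exactly the paper's argument, where your translate corresponds to the paper's choice $\e=\delta/2$ and your $S_N(f_\e)$ are the paper's polynomials $Q_k$.
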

\begin{proof}
(1) $\Rightarrow$ (2)\
  Let $f:\C_+\to\C$, be  the uniform limit on $\C_+$ of a sequence $(P_k)$ of Dirichlet polynomials $P_k(s)=\sum_{n=1}^{n_k}\frac{b_{n,k}}{n^s}$. Since $\mathcal{H}_\infty$ is a Banach space, $f$ belongs to  $\mathcal{H}_\infty.$ Thus, $f$ can be pointwise represented as $f(s)=\sum_{n=1}^{\infty}\frac{a_n}{n^s}$ and as the uniform limit of uniformly continuous functions,  $f$ too is uniformly continuous.

  (2) $\Rightarrow$ (1)  Conversely, let $f(s)=\sum_{n=1}^{\infty}\frac{a_n}{n^s}$ pointwise on $\C_+$ with $f$ uniformly continuous on  $\C_+$. Given $\varepsilon>0$, there exists $\delta>0$ such that if $s_1,s_2 \in \C_+$ satisfy $|s_1-s_2|<\delta$, then $|f(s_1)-f(s_2)|<\varepsilon/2$. {Since,} by Proposition \ref{AisclosedinHinfty},   $f$ is bounded on $\C_+$, we have  $\sigma_u(f)=\sigma_b(f)\leq 0$. Hence, given $\delta/2$, the polynomials $P_k(s)=\sum_{n=1}^{k}\frac{a_n}{n^s}$   converge uniformly to $f$ on $[\Re s \geq \delta/2]$. Thus if we set $Q_k(s)=P_k(s+\delta/2)=\sum_{n=1}^{k}\frac{a_n}{n^{\delta/2}}\frac{1}{n^s},$ then $\{Q_k(s)\}$ is a sequence of Dirichlet polynomials converging uniformly to $f(s+\delta/2)$ on $\C_+$ and we obtain
$$
\left|f(s)- \sum_{n=1}^{k}\frac{a_n}{n^{\delta/2}}\frac{1}{n^s}\right|\leq \left|f(s)-f(s+\frac{\delta}{2})\right|+\left|f(s+\frac{\delta}{2})- \sum_{n=1}^{k}\frac{a_n}{n^{\delta/2}}\frac{1}{n^s}\right| <
\frac{\varepsilon}{2}+\left\|f(\cdot+\frac{\delta}{2})-Q_k\right\|,
$$
for every $s\in \C_+$ and every $k$. Now choosing $k_0$ such that $\|f(\cdot+\frac{\delta}{2})-Q_k\|\leq \frac{\varepsilon}{2}$ for $k\geq k_0$, we arrive at
   $$\|f-Q_k\|\leq \varepsilon,$$
for every $k\geq k_0.$
  \end{proof}

Let $B_{c_0}$ denote the open unit ball in the space  $c_0$ of sequences which converge to zero and for a function $f:B_{c_0}\rightarrow\C,$  set  $\|f\|_{B_{c_0}}=\sup\{|f(x)|: x\in B_{c_0}\}.$ Endowed with this norm,  the space of all  functions on $B_{c_0}$ which are  uniformly continuous and complex Fr\'{e}chet differentiable is a Banach algebra, which we denote  by $\mathcal{A}_u( B_{c_0}).$

\begin{theorem}\label{AisAco}
  $\mathcal{A}( \C_+)$ is isometrically isomorphic to $\mathcal{A}_u( B_{c_0}).$
\end{theorem}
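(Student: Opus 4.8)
The plan is to exhibit the isometric isomorphism as the \emph{Bohr lift} and then reduce the theorem to the approximation statements already at hand. Recall that every integer $n\ge 2$ factors uniquely as $n=p_1^{\alpha_1}\cdots p_r^{\alpha_r}$, where $(p_j)$ is the increasing sequence of primes; writing $z_j=p_j^{-s}$ one has $n^{-s}=\prod_j z_j^{\alpha_j}=:z^{\alpha(n)}$. I would define $\mathcal{B}$ on a Dirichlet polynomial $P(s)=\sum_{n=1}^{N}a_n n^{-s}$ by $\mathcal{B}(P)(z)=\sum_{n=1}^{N}a_n z^{\alpha(n)}$, an analytic polynomial on $c_0$ depending on finitely many variables. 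Such a polynomial is holomorphic and uniformly continuous on the bounded set $B_{c_0}$, hence lies in $\mathcal{A}_u(B_{c_0})$, and conversely every polynomial on $c_0$ arises this way; so $\mathcal{B}$ is a linear bijection from the Dirichlet polynomials onto the polynomials on $c_0$, and it is multiplicative since $n^{-s}m^{-s}=(nm)^{-s}$ corresponds to $z^{\alpha(n)}z^{\alpha(m)}=z^{\alpha(nm)}$.

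The heart of the matter is the isometry $\|P\|_{\mathcal{H}_\infty}=\|\mathcal{B}(P)\|_{B_{c_0}}$ on polynomials. Since $\mathcal{B}(P)$ involves only $z_1,\dots,z_r$, its supremum over $B_{c_0}$ equals its supremum over the polydisc $\overline{\D}^{\,r}$, which by the maximum modulus principle is attained on the torus $\T^r$. On the Dirichlet side, a Bohr convexity argument shows $\sup_{\Re s>0}|P(s)|=\sup_{t\in\R}|P(it)|$, and this last quantity is $\sup_{t}\big|\sum_n a_n (p_1^{-it},\dots,p_r^{-it})^{\alpha(n)}\big|$. Kronecker's theorem, applied to the rationally independent numbers $\log p_1,\dots,\log p_r$, makes the points $(p_1^{-it},\dots,p_r^{-it})$ dense in $\T^r$, so this equals $\sup_{\T^r}|\mathcal{B}(P)|=\|\mathcal{B}(P)\|_{B_{c_0}}$. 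This is precisely the isometric content of the identification $\mathcal{H}_\infty\cong H^\infty(B_{c_0})$ from [13], which I would simply invoke.

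It remains to match the two subspaces under this isometry. By Theorem \ref{CharactuniflimDirichletpoly}, $\mathcal{A}(\C_+)$ is exactly the closure of the Dirichlet polynomials inside $\mathcal{H}_\infty$. On the other side I would use the approximation result from [14] that the polynomials are dense in $\mathcal{A}_u(B_{c_0})$, i.e. that $\mathcal{A}_u(B_{c_0})$ is the closure of the polynomials in $H^\infty(B_{c_0})$. Because $\mathcal{B}$ is a surjective multiplicative linear isometry between these two dense subspaces, it extends by continuity to a surjective isometry $\mathcal{A}(\C_+)\to\mathcal{A}_u(B_{c_0})$ that respects products, that is, to an isometric algebra isomorphism.

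The main obstacle is the polynomial isometry of the second paragraph: the Bohr--Kronecker reduction of a supremum over the half plane to a supremum over a finite dimensional torus is the genuinely nontrivial input, but it is furnished by [13]. Granting that, the only remaining substantive point is the density of polynomials in $\mathcal{A}_u(B_{c_0})$, which is exactly where uniform continuity (rather than mere boundedness) enters and which I would take from [14]; everything else is the routine extension of an isometry defined on dense subspaces.
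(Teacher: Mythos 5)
Your proposal is correct and follows essentially the same route as the paper's own proof: both use the Bohr lift $1/n^s \mapsto z^{\alpha}$ ($n=p^{\alpha}$) as a linear, multiplicative isometry between the span of Dirichlet monomials and the span of monomials on $c_0$ (the isometry being Bohr's theorem, which you re-derive via Kronecker/maximum modulus and the paper cites directly), then extend to the closures using Theorem \ref{CharactuniflimDirichletpoly} on the Dirichlet side and the density of polynomials in $\mathcal{A}_u(B_{c_0})$ on the other. The one point to be careful about is that the density you invoke must be density of \emph{finite-variable} polynomials (finite linear combinations of monomials $z^\alpha$), not of general continuous polynomials on $c_0$, since only the former lie in the range of your map $\mathcal{B}$; this is exactly what the paper's equation \eqref{AuBc0} provides, assembled from Aron--Cole--Gamelin together with Bogdanowicz's weak uniform continuity of polynomials on $B_{c_0}$ and Dineen's approximation result, rather than taken from a single reference as you do.
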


{Hedenmalm, Lindqvist and Seip} in \cite{HeLiSe} obtained that $\mathcal{H}_\infty$, the
Banach space of all Dirichlet series bounded and convergent on $\C_+$ is isometrically isometric to  ${H}_\infty(B_{c_0}),$
the Banach space of all bounded and  complex Fr\'{e}chet differentiable functions on the open unit ball of $c_0$.
{Even though their proof makes use of} other isometries, a careful analysis yields our claim, taking into consideration that
$B_{c_0}$ {is denoted by} $\D^\infty\cap c_0(\N)$ in \cite{HeLiSe}, {and using our formula}   \eqref{AuBc0},
below. 
On the other hand, in the very recent preprint \cite{DeFreMaSe}, another proof of that isometry can be found, but again the proof of our statement is only  implicit there and {one} needs to invoke  \eqref{AuBc0}. We present here {an explicit} proof of this fact since we shall need it in the proof  Theorem  \ref{AA}.

\begin{proof}
Let $\N_0^{(\N)}$ denote the set of  sequences of non-negative integers with finite support. {
 That is, $\N_0^{(\N)}$ consists of all $\alpha=(\alpha_1,\ldots,\alpha_N,0\ldots)$} where $N \in \N$ and $\alpha_j\in \N_0=\N\cup\{0\}$ for $j=1,\ldots,N$. By $z^\alpha$ we denote the monomial defined on $\C^\N$ by $z^\alpha=z_1^{\alpha_1}\ldots z_N^{\alpha_N}$. Obviously
 $z^\alpha$ belongs to  $\mathcal{A}_u( B_{c_0})$  since it is a complex Fr\'{e}chet differentiable function when restricted to $c_0$ that is uniformly continuous on $B_{c_0}$. But $z^\alpha$ also can be considered as a monomial in $N$-variables defined on $\C^N$.

A function $P:c_0\to \C$ is called an $m$-homogeneous polynomial if there exists  a continuous $m$-linear form $A:c_0^m\to \C,$ such that $P(x)=A(x,\ldots,x)$ for every $x \in c_0$. Clearly $P \in  \mathcal{A}_u( B_{c_0})$.  It is known (see e.g \cite{ACG}) that every function on  $\mathcal{A}_u( B_{c_0})$ is the uniform limit of linear combinations of continuous homogeneous polynomials. In other words
$$
	\mathcal{A}_u( B_{c_0}) =
		\overline{\text{span}\{P_m, \ \text{$m$-homogeneous polynomial}\, :\, m  \in \N_0\}}^{\|.\|_{B_{c_0}}}.
$$
But in \cite{Bogda} it was shown  that every homogeneous polynomial on $c_0$ is weakly uniformly continuous when restricted to the unit ball of $c_0$ and, by  \cite[Proposition 2.8, p. 90]{Dineen} and the fact that the canonical basis of $c_0$ is shrinking, every  such  polynomial is the uniform limit on $B_{c_0}$ of a sequence of polynomials of the form $P(z)=\sum_{\alpha \in J}c_\alpha z^\alpha$ where $J$ is a finite subset of $N_0^{(\N)}$ and $c_\alpha \in \C$ for every $\alpha \in J$. Thus we have
\begin{equation}\label{AuBc0}
	 \mathcal{A}_u(B_{c_0})=\overline{\text{span}\{z^\alpha\,:\, \alpha \in \N_0^{(\N)}\}}^{\|.\|_{B_{c_0}}}.
\end{equation}
The other ingredient needed for the proof is a  deep result of Harald Bohr \cite{Bohr} (see also \cite[pp. 115-117]{QuQu13}).  Let $p=(p_n)$ denote the increasing sequence of prime numbers. Given $\alpha=(\alpha_1,\ldots,\alpha_k,0\ldots)\in \N_0^{(\N)}$ we set $p^\alpha=p_1^{\alpha_1}\ldots,p_k^{\alpha_k}$ and $N \in \N,$ we denote by $\Lambda_N$ the family of multi-indexes $\alpha \in \N_0^{(\N)}$ such that $p^\alpha \leq N$. Then
$$
	\sup_{\Re s>0}\left|\sum_{n=1}^{N}\frac{a_n}{n^s}\right| =
		\sup_{z \in \D^k}\left|\sum_{\alpha \in \Lambda_N}a_{p^\alpha}z^\alpha\right|,
$$
for every $a_1, \ldots a_N \in \C$ and all $N,$ where the number $k$ of variables is the subscript of the biggest prime number $p_k$ less than or equal to $N$ (in other words, k is the number of primes less than or equal to $N$).
This results yields that the correspondence:
$$
	\frac{1}{n^s} \longmapsto z^\alpha, \quad n=p^\alpha,
$$
generates a mapping
$$
	\phi: \text{span}\left\{\frac{1}{n^s}:\, n \in \N\right\}\subset \mathcal{A}(\C_+)  \longrightarrow
		\text{span}\left\{z^\alpha\,:\, \alpha \in \N_0^{(\N)}\right\}\subset \mathcal{A}_u(B_{c_0}) ,
$$
which is an isometry and is both linear and multiplicative. Thus it can be extended to a {surjective isometry}
$$
	\tilde{\phi}: \overline{\text{span}\{\frac{1}{n^s}:\, n \in \N\}}^{\|.\|_\infty}  \longrightarrow \, \,\, \,\, \,\, \,							 \overline{\text{span}\{z^\alpha\,:\, \alpha \in \N_0^{(\N)}\}}^{\|.\|_{B_{c_0}}}.
$$

The conclusion follows from Theorem \ref{CharactuniflimDirichletpoly} which states that $\mathcal{A}(\C_+)=\overline{\text{span}\{\frac{1}{n^s}:\, n \in \N\}}^{\|.\|_\infty}$, and  from \eqref{AuBc0}.

\end{proof}

\begin{remark}
Since we can extend any uniformly continuous function on $\C_+$ to its closure $\overline{\C_+}$, any function in $\mathcal A(\C_+)$ extends uniformly to a uniformly continuous function on $\overline{\C_+}$. However,
there {exist} functions $f:\overline{\C_+}\to \C$ {that are}
uniformly continuous on $\overline{\C_+}$ and pointwise representable for a
certain $\sum_{n=1}^{\infty}\frac{a_n}{n^s}$ for every $s \in \C_+$, but {for which} nevertheless there
{is some} $t \in \R$ so that $\sum_{n=1}^{\infty}\frac{a_n}{n^{it}}$ does
not converge. {That is,}  in general $f(it)$ is not representable by this Dirichlet series.

Indeed, there exists $g(z)=\sum_{n=0}^{\infty}b_nz^n$ in the {disk algebra} such that $\sum_{n=0}^{\infty}b_n$ is not convergent. Hence, by considering $g$ in $ \mathcal{A}_u(B_{c_0})$ and taking  $f=\tilde{\phi}^{-1}(g)\in \mathcal{A}(C_+)$, we have $f(s)=\sum_{l=1}^{\infty}b_{l-1}\frac{1}{(2^{l-1})^s}$
for every $s \in \C_+$. But
$$\sum_{l=1}^{\infty}b_{l-1}\frac{1}{(2^{l-1})^0}=\sum_{n=0}^{\infty}b_n$$
does not converge and hence cannot represent the uniformly continuous  unique  extension of $f$ at 0.
\end{remark}

Given a non-empty set of indices $I$, we consider the compact Hausdorff space $\overline{\D}^I$, endowed with the product topology. Quite recently in \cite{MakNes}, Makridis and Nestoridis  introduced  the space $A(\overline{\D}^I)$ of all functions $f:\overline{\D}^I\to \C$ continuous on $\overline{\D}^I,$ endowed with the product topology, and separately holomorphic on $\D^I.$  The space $A(\overline{\D}^I)$  is a Banach subalgebra of the space $C(\overline{\D}^I)$ of all continuous functions on $\overline{\D}^I$ endowed with the supremum norm.
{We now show that whenever} $I$ is an infinite and countable set, this space is going to be isometrically isomorphic to the space of Dirichlet series $\mathcal{A}( \C_+)$.

\begin{theorem}\label{AA}
  For $I=\N,$ the  Banach algebra  $A(\overline{\D}^{\N})$ is isometrically isomorphic to $\mathcal{A}( \C_+)$ and also to $\mathcal{A}_u(B_{c_0})$.
\end{theorem}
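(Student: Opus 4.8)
The plan is to exploit Theorem \ref{AisAco}, which already gives an isometric isomorphism between $\mathcal{A}(\C_+)$ and $\mathcal{A}_u(B_{c_0})$. It therefore suffices to produce an isometric algebra isomorphism $A(\overline{\D}^{\N})\cong\mathcal{A}_u(B_{c_0})$ and to compose the two. Both of these spaces are closed subalgebras of a space of bounded functions, and the most economical comparison runs through their common dense subalgebra of polynomials in the monomials $z^\alpha$, $\alpha\in\N_0^{(\N)}$. Concretely, I would set up the formal correspondence $z^\alpha\mapsto z^\alpha$ on finite linear combinations and show it is a norm-preserving, linear, multiplicative bijection between dense subalgebras, then extend by continuity.

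First I would record the two relevant density statements. On the $c_0$ side, formula \eqref{AuBc0} already gives $\mathcal{A}_u(B_{c_0})=\overline{\text{span}\{z^\alpha:\alpha\in\N_0^{(\N)}\}}^{\|\cdot\|_{B_{c_0}}}$. On the polydisc side, each monomial $z^\alpha$ with $\alpha\in\N_0^{(\N)}$ depends on only finitely many coordinates, is continuous on $\overline{\D}^{\N}$ and separately holomorphic on $\D^{\N}$, hence lies in $A(\overline{\D}^{\N})$; and the defining theorem of \cite{MakNes} identifies $A(\overline{\D}^{\N})$ precisely as the closure (in the sup-norm over $\overline{\D}^{\N}$) of the finite-variable polynomials, i.e.\ $A(\overline{\D}^{\N})=\overline{\text{span}\{z^\alpha:\alpha\in\N_0^{(\N)}\}}^{\|\cdot\|_{\overline{\D}^{\N}}}$.

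The heart of the argument, and the step I expect to be the main (though not deep) obstacle, is to verify that $z^\alpha\mapsto z^\alpha$ is norm-preserving. Fix $P=\sum_{\alpha\in J}c_\alpha z^\alpha$ with $J\subset\N_0^{(\N)}$ finite, and choose $N$ large enough that every $\alpha\in J$ is supported in $\{1,\dots,N\}$, so $P$ is a genuine polynomial in $z_1,\dots,z_N$. Since $P$ ignores the remaining coordinates, $\sup_{z\in\overline{\D}^{\N}}|P(z)|=\sup_{(z_1,\dots,z_N)\in\overline{\D}^{N}}|P|$. On the other hand, restricting elements of $B_{c_0}$ to their first $N$ coordinates, and conversely padding any point of the open polydisc with zeros to land in $B_{c_0}$, shows $\sup_{z\in B_{c_0}}|P(z)|=\sup\{|P(z_1,\dots,z_N)|:\max_j|z_j|<1\}$. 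By continuity of $P$ on the closed polydisc together with the maximum modulus principle, both right-hand sides coincide, so $\|P\|_{\overline{\D}^{\N}}=\|P\|_{B_{c_0}}$.

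Consequently the identity-on-monomials map is a linear, multiplicative isometry whose range is dense on both sides; it therefore extends uniquely to a surjective isometry $\Psi:A(\overline{\D}^{\N})\to\mathcal{A}_u(B_{c_0})$, and multiplicativity passes to the closure by continuity, making $\Psi$ an isometric algebra isomorphism. Composing $\Psi$ with the isomorphism furnished by Theorem \ref{AisAco} yields the remaining isometric isomorphism onto $\mathcal{A}(\C_+)$, which completes the proof.
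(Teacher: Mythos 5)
Your proposal is correct and takes essentially the same route as the paper: the identity map $z^\alpha\mapsto z^\alpha$ between the dense subalgebras given by \eqref{AuBc0} and by \cite[Proposition 3.1]{MakNes}, extended by continuity to a surjective linear, multiplicative isometry, and then composed with the isomorphism of Theorem \ref{AisAco}. The only difference is that you spell out the norm-preservation step (padding points of $\D^N$ with zeros and using density of $\D^N$ in $\overline{\D}^N$), which the paper simply asserts.
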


\begin{proof}
The mapping
$$
	\psi: \text{span}\{z^\alpha\,:\, \alpha \in \N_0^{(\N)}\}\subset \mathcal{A}_u(B_{c_0}) \longrightarrow A(\overline{\D}^{\N}),
$$
generated by setting $\psi(z^\alpha)=z^\alpha,$ is an isometry which is into, linear and multiplicative. Hence $\psi$ can
be extended to another linear and multiplicative {surjective} isometry
$$
	\tilde{\psi}: \overline{\text{span}\{z^\alpha\,:\, \alpha \in \N_0^{(\N)}\}}^{\|.\|_{B_{c_0}}} \longrightarrow
		\overline{\text{span}\{z^\alpha\,:\, \alpha \in \N_0^{(\N)}\}}^{\|.\|_{C(\overline{\D}^I)}}.
$$
But we have already shown that
$$ \mathcal{A}_u(B_{c_0})=\overline{\text{span}\{z^\alpha\,:\, \alpha \in \N_0^{(\N)}\}}^{\|.\|_{B_{c_0}}},$$
Moreover, by \cite[Proposition 3.1]{MakNes}, given  $f \in A(\overline{\D}^{\N}),$ there exists a sequence $(P_n)$ of polynomials of the form
$$P(z)=\sum_{\alpha \in J}c_\alpha z^\alpha,$$
where $J$ is a finite subset of $\N_0^{(\N)},$ such that the sequence   $(P_n)$ converges uniformly on $\overline{\D}^{\N}$ to $f$.
Hence,
$$
	A(\overline{\D}^{\N})=\overline{\text{span}\{z^\alpha\,:\, \alpha \in \N_0^{(\N)}\}}^{\|.\|_{C(\overline{\D}^I)}}
.$$
We have obtained that $A(\overline{\D}^{\N})\overset{1}{\equiv} \mathcal{A}_u(B_{c_0})$. Finally, by Theorem \ref{AisAco}, $\mathcal{A}_u( B_{c_0})$ is isometrically isomorphic to
$\mathcal{A}( \C_+)$.
\end{proof}
\begin{corollary}
  Given a  function $f:\C\to \C$, the following are equivalent.
  \begin{enumerate}
    \item   On  every (right) half-plane,  $f$ is the uniform limit of Dirichlet polynomials.
    \item There exists a Dirichlet series $\sum_{n=1}^{\infty} a_n\frac{1}{n^s},$ which converges to $f$ at every point of\, $\C$.
  \end{enumerate}
\end{corollary}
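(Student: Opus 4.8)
The plan is to reduce everything to the half plane $\C_+$ treated in Theorem~\ref{CharactuniflimDirichletpoly} by means of horizontal translations. For $\theta\in\R$ I would write $H_\theta=\{s\in\C:\Re s>\theta\}$, so that $H_0=\C_+$, and observe that if $P(s)=\sum_{n=1}^N b_n n^{-s}$ is a Dirichlet polynomial then so is $P(\cdot+\theta)=\sum_{n=1}^N (b_n n^{-\theta}) n^{-s}$, while $\sup_{s\in H_\theta}|f(s)-P(s)|=\sup_{s\in\C_+}|f(s+\theta)-P(s+\theta)|$. Hence $f$ is a uniform limit of Dirichlet polynomials on $H_\theta$ exactly when $g_\theta:=f(\cdot+\theta)$ is one on $\C_+$, and the very same translation carries a Dirichlet series representing $g_\theta$ on $\C_+$ into one representing $f$ on $H_\theta$.

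For the implication $(2)\Rightarrow(1)$ I would argue directly from the abscissas recalled in Section~2. If $\sum_{n=1}^\infty a_n n^{-s}$ converges at every point of $\C$ then $\sigma_c=-\infty$; the inequality $\sigma_a\le\sigma_c+1$ forces $\sigma_a=-\infty$, and therefore $\sigma_u=\sigma_b\le\sigma_a=-\infty$ as well. Consequently the partial sums $P_N(s)=\sum_{n=1}^N a_n n^{-s}$, which are themselves Dirichlet polynomials, converge to $f$ uniformly on $\{\Re s\ge\theta\}$ for every real $\theta$, and in particular on each $H_\theta$. This is precisely statement $(1)$.

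For $(1)\Rightarrow(2)$ I would fix $\theta$ and note that, since $g_\theta$ is a uniform limit of Dirichlet polynomials on $\C_+$, Theorem~\ref{CharactuniflimDirichletpoly} furnishes a Dirichlet series representing $g_\theta$ pointwise on $\C_+$; translating back yields a convergent Dirichlet series $\sum_n a_n^{(\theta)} n^{-s}$ equal to $f$ throughout $H_\theta$. The crux is to see that these representations, produced one half plane at a time, are in fact the same series. Here I would invoke the uniqueness of Dirichlet coefficients: a convergent Dirichlet series vanishing on a right half plane has all coefficients zero, as one sees by letting $\Re s\to+\infty$ and isolating the smallest surviving index. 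Applying this to the difference $\sum_n\bigl(a_n^{(\theta_1)}-a_n^{(\theta_2)}\bigr)n^{-s}$ on the common half plane $H_{\max(\theta_1,\theta_2)}$, where both series converge to $f$, gives $a_n^{(\theta_1)}=a_n^{(\theta_2)}$ for all $n$. The coefficients are thus independent of $\theta$; denoting them $a_n$, the single series $\sum_n a_n n^{-s}$ converges to $f$ on every $H_\theta$, and since $\bigcup_{\theta\in\R}H_\theta=\C$ it converges to $f$ at every point of $\C$, which is $(2)$.

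The only genuine obstacle is the gluing step in $(1)\Rightarrow(2)$: Theorem~\ref{CharactuniflimDirichletpoly} a priori only supplies, for each half plane separately, \emph{some} representing series, so one must rule out that distinct half planes produce distinct series. The uniqueness principle for Dirichlet series settles this, and the nesting $H_{\theta_1}\supset H_{\theta_2}$ for $\theta_1<\theta_2$ always provides a common domain on which to compare any two of the candidate series. Everything else is routine bookkeeping with the translation $s\mapsto s+\theta$ together with the abscissa inequalities of Section~2.
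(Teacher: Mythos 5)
Your proposal is correct and follows essentially the same route as the paper: the implication $(2)\Rightarrow(1)$ via the abscissa inequality $\sigma_a\le\sigma_c+1$, and $(1)\Rightarrow(2)$ by translating each half plane back to $\C_+$, applying Theorem~\ref{CharactuniflimDirichletpoly}, and gluing the resulting series through uniqueness of Dirichlet coefficients. The only cosmetic differences are that the paper translates by non-positive integers rather than arbitrary reals and simply cites the uniqueness principle where you sketch its proof.
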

\begin{proof}
  (2) ${\Rightarrow}$ (1).  Assume that $f(s)=\sum_{n=1}^{\infty} a_n\frac{1}{n^s}$ for all points $s$ of $\C$. Since $\sigma_a(f)\leq \sigma_c(f)+1$ we have that $\sum_{n=1}^{\infty} |a_n|\frac{1}{n^{{\rm Re}\,  s}}<\infty$ for every $s$. Hence the sequence of Dirichlet polynomials $\sum_{n=1}^{N} a_n\frac{1}{n^s}$ converges to $f$ uniformly on every half-plane.

     (1) ${\Rightarrow}$ (2). Fix $k$ a non-positive integer. With  a suitable change of variables, Theorem \ref{CharactuniflimDirichletpoly}, implies that there exists a Dirichlet series $\sum_{n=1}^{\infty} a_n(k)\frac{1}{n^s}$ such that
  $$f(s)=\sum_{n=1}^{\infty} a_n(k)\frac{1}{n^s},$$
  for every  $s$ in  $\C$ with $\Re s\geq k$. Thus all Dirichlet series {of $f$}
  coincide on $\C_+$. Now the uniqueness of coefficients of a convergent Dirichlet series on $\C_+$ implies that $a_n(k)=a_n(l)$ for every  non positive integers $k$ and $l$ and every $n\in \N$. Defining $a_n=a_n(k)$, we obtain
  $f(s)=\sum_{n=1}^{\infty} a_n\frac{1}{n^s},$ for every $s\in \C$.
\end{proof}


\section{Universal Dirichlet series}

The {existence of the}
first universal Dirichlet series {was} established by the second author in \cite{Bayart}
under the assumption that the compact sets under consideration were ``admissible."
Further results on universal Dirichlet series  can be found in \cite{NePa}, \cite{DeMo}, \cite{Mo}.
Now, we improve most of those
results {by} relaxing the assumption that the compact set be admissible.

For $\sigma>0$ we denote $\|\sum_{n=1}^{\infty}a_n n^{-s}\|_\sigma=\sum_{n=1}^{\infty}|a_n|n^{-\sigma}$, 
and
$$
	D_a(\C_+) = \left\{\sum_{n=1}^{\infty}a_n n^{-s}\,:\, \left\|\sum_{n=1}^{\infty}a_n n^{-s}\right\|_\sigma<\infty, \ \text{for all}\ \sigma>0\right\}.
$$
We endow $D_a(\C_+)$ with the {Fr\'echet} topology induced by the semi-norms  $\|\cdot\|_\sigma, \sigma>0$ and we denote  $\C_-=\{s \in \mathbb C\, :\, \Re s<0\}.$


\begin{theorem}\label{thm:approxdirichlet}
Let $K\subset\overline{\C_-}$ be compact with connected complement, $f\in D_a(\C_+)$, $g\in A(K)$,
$\sigma>0$ and $\e>0$. Then there exists a Dirichlet polynomial $h=\sum_{n=1}^N a_nn^{-s}$ such that $\|h-g\|_{C(K)}<\e$ and $\|h-f\|_\sigma<\e.$
\end{theorem}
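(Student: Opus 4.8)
The plan is to separate the two requirements and to let Mergelyan's theorem absorb the geometry of $K$. First I would dispose of the condition $\|h-f\|_\sigma<\e$ almost for free. Writing $f=\sum_{n\ge1}b_n n^{-s}$ and using that $f\in D_a(\C_+)$, I can fix $N_0$ with $\sum_{n>N_0}|b_n|n^{-\sigma}<\e/2$, and then search for $h$ of the form $h=\sum_{n=1}^{N_0}b_n n^{-s}+R$, where $R=\sum_{N_0<n\le N}c_n n^{-s}$ is a correction carried entirely by frequencies $>N_0$. For such an $h$ one has $\|h-f\|_\sigma\le \sum_{n>N_0}|b_n|n^{-\sigma}+\|R\|_\sigma$, so it is enough to arrange $\|R\|_\sigma<\e/2$. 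Since on $K$ we have $h-g=\big(\sum_{n\le N_0}b_n n^{-s}-g\big)+R$, the whole theorem reduces to the following self-contained claim: given $\Phi:=g-\sum_{n\le N_0}b_n n^{-s}\in A(K)$ and $\e',\sigma>0$, there is a Dirichlet polynomial $R=\sum_{n>N_0}c_n n^{-s}$ with $\|R-\Phi\|_{C(K)}<\e'$ and $\|R\|_\sigma<\e'$.

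To prove this claim I would first invoke Mergelyan: because $K$ has connected complement and $\Phi\in A(K)$, there is an ordinary polynomial $Q(s)$ with $\|Q-\Phi\|_{C(K)}<\e'/2$. This is exactly the step that replaces the ``admissible'' hypothesis of \cite{Bayart} by the connectedness of the complement: once $\Phi$ has been replaced by the entire function $Q$, the shape of $K$ plays no further role and only the analyticity of the target is used. What remains is the purely one-sided problem of approximating a single polynomial $Q$ on $K$ by a Dirichlet polynomial supported on frequencies $>N_0$ and of small $\|\cdot\|_\sigma$-norm; this is the statement one extracts (in strengthened form) from \cite{Bayart}, and it can be applied to the entire function $Q$ on any admissible compact set containing $K$, after which one restricts back to $K$.

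For the construction behind that last step I would use that the exponents $\log n$ become finely spaced at infinity, $\log(n+1)-\log n\sim 1/n$, so that finite differences of $n^{-s}$ over a block of frequencies near a large base $B$ reproduce the derivatives of $e^{-s\log n}$ and, after normalization, the functions $B^{-s}s^k$ on $K$. Linear combinations of such blocks therefore approximate $B^{-s}p(s)$ for an arbitrary polynomial $p$; choosing $p$ (again by Runge--Mergelyan) so that $p(s)\approx B^{s}Q(s)$ on $K$ gives $B^{-s}p(s)\approx Q(s)$. Multiplying by $B^{-s}$ is harmless on $K$ because $K\subset\overline{\C_-}$ forces $|B^{-s}|=B^{-\Re s}$ to stay bounded on $K$ by the fixed constant $B^{\max_K|\Re s|}$, so one only has to sharpen the approximation of $B^{s}Q$ correspondingly. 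All frequencies used lie above $N_0$, giving $R$ the required support.

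The hard part will be the simultaneous control of $\|R\|_\sigma=\sum_n|c_n|n^{-\sigma}$. Sending the base $B\to\infty$ produces the small weight $B^{-\sigma}$ that is meant to kill the norm, but the same enlargement makes $B^{s}Q$ more oscillatory on $K$, which raises the degree of $p$ and inflates the difference coefficients $c_n$ (each extra order of differencing costs a factor of order $B$, the reciprocal grid spacing). A short block of frequencies is in fact not enough: the coefficient growth then outruns $B^{-\sigma}$. The decisive point, and the real content of the strengthened construction, is to spread the differencing over a long geometric range $[B,B^{A}]$ of frequencies --- equivalently, over a long interval of exponents $\log n$ --- so that interpolating the fixed target does not force high-order differences, and then to let the weights $n^{-\sigma}$ on that range absorb the residual cost. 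Once this balance is struck, combining the three estimates (the tail of $f$, the Mergelyan approximation of $g$ on $K$, and the high-frequency reproduction of $Q$) within $\e$ produces the desired Dirichlet polynomial $h$.
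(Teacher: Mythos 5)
Your opening reduction --- absorbing the tail of $f$ and reducing everything to the core claim that for $\Phi\in A(K)$ and $\e',\sigma>0$ there is a Dirichlet polynomial $R=\sum_{n>N_0}c_nn^{-s}$ with $\|R-\Phi\|_{C(K)}<\e'$ and $\|R\|_\sigma<\e'$ --- is correct, and it is exactly the paper's own reduction. The gap is that neither of your two routes proves that core claim. The route through \cite{Bayart} rests on the premise that $K$ (equivalently, after your Mergelyan step, a closed rectangle containing $K$) lies inside some admissible compact set. That premise is false: admissibility in \cite{Bayart} is strictly stronger than connectedness of the complement --- it carries a geometric restriction on the vertical extent of $K$, tied to the almost-periodicity of $n^{-s}$ in the imaginary direction that underlies the Bohr-transfer argument there --- so a compact set of large height has no admissible superset. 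Indeed, if every compact subset of $\overline{\C_-}$ with connected complement had an admissible superset, Theorem \ref{thm:approxdirichlet} would be a two-line corollary of Mergelyan's theorem plus \cite{Bayart}, and removing admissibility --- which the paper presents as its main contribution, and which it needs in the proof of Theorem \ref{UniversalDirichletseriesth} precisely to handle the large rectangles $K_m=[-m,0]\times[-m,m]$ --- would be no improvement at all. So your sentence ``once $\Phi$ has been replaced by the entire function $Q$, the shape of $K$ plays no further role'' is exactly what fails: what remains of the shape of $K$ is its size, and large sets are what the admissible machinery cannot reach.

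Your second, constructive route stalls at the very point you yourself identify as decisive. The tension you describe is real: reproducing $s^kB^{-s}$ by divided differences of $n^{-s}$ at $n\approx B$ costs coefficients of order $B^k$, while approximating $B^sQ$ on $K$ forces $k$ to grow like $\log B$, so the coefficients grow like $e^{c(\log B)^2}$ against a weight of only $B^{-\sigma}$; and no estimate is offered showing that spreading the differencing over $[B,B^A]$ resolves this --- ``once this balance is struck'' is the theorem itself, not a step of its proof. The paper avoids all such quantitative issues by a duality argument: a nonzero functional $\phi\in A(K)^*$ is represented by a measure $\mu$ supported in $K$; then $\phi(n^{-s})=\mathcal L_\mu(\log n)$, where the Laplace transform $\mathcal L_\mu$ is entire of exponential type, satisfies $\limsup_{x\to+\infty}\log|\mathcal L_\mu(x)|/x\geq 0$, and is not identically zero --- this last point is where connectedness of $K^c$ enters, via density of polynomials in $A(K)$. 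Lemma \ref{lem:exponentialtype2} then yields $\sum_n|\mathcal L_\mu(\log n)|\,n^{-(1-\delta)}=+\infty$ with $\delta=\sigma/2$, and the Bayart--Matheron lemma (Lemma \ref{lem:geometry}) converts this divergence, valid for every nonzero $\phi$, into density in $A(K)$ of the sums $\sum_{n=N}^{M}a_nn^{-(1-\delta)}n^{-s}$ with $|a_n|\leq 1$, whose $\|\cdot\|_\sigma$-norm is automatically at most $\sum_{n\geq N}n^{-(1+\delta)}<\e$. In other words, the smallness of the coefficients is built into the dense class from the outset rather than fought for in an explicit construction; to salvage your approach you would have to actually prove the long-range differencing claim, and I see no way to do that with the tools you invoke.
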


The proof of this theorem, inspired by \cite{BAGCHI} and by \cite[Section 11.5]{BM09}, needs some preparation. We first give two lemmas on entire functions of exponential type.

\begin{lemma}\label{lem:exponentialtype1}
Let {$f:\C \to \C$} be an entire function of exponential type and let  $\alpha$ be greater than the type. There exists a positive constant $C$ depending only on $f,$  such that if $u>1$ with $|f(u)|>0$ and $N$ is an integer satisfying $N\geq e^2\alpha(u+1),$ then there exists an interval $I\subset [u-1,u+1]$ of length $1/(2N^2)$ such that
$$
	x\in I\implies |f(x)|\geq \frac{|f(u)|}2+Ce^{-N}.
$$
\end{lemma}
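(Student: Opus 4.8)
The plan is to combine two mechanisms, according to whether $|f(u)|$ is large or small compared with the local growth of $f$, and to read off the two summands $|f(u)|/2$ and $Ce^{-N}$ from these two regimes. First I would record the estimates coming from the exponential type. Since $\alpha$ exceeds the type, there is a constant $C_0=C_0(f,\alpha)$ with $|f(z)|\le C_0 e^{\alpha|z|}$ for all $z$, and a Cauchy estimate on the circle of radius $1/\alpha$ about a real point $x\in[u-1,u+1]$ gives $|f'(x)|\le C_0 e\alpha\,e^{\alpha(u+1)}$. The hypothesis $N\ge e^2\alpha(u+1)$ turns this into $|f'(x)|\le C_0 e\alpha\,e^{N/e^2}$, so on any interval of length $1/(2N^2)$ the oscillation of $|f|$ is at most $C_0 e\alpha\,e^{N/e^2}/(2N^2)$. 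This settles the regime where $|f(u)|$ dominates: if $|f(u)|\ge e^{N/e^2}$, then on $[u-1/(4N^2),\,u+1/(4N^2)]$ one has $|f(x)|\ge|f(u)|-C_0 e\alpha\,e^{N/e^2}/(2N^2)\ge|f(u)|/2+Ce^{-N}$ for $N$ large, since both the subtracted term and $Ce^{-N}$ are negligible against $|f(u)|/2$.

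The substance of the lemma is the complementary regime, where $|f(u)|$ may be tiny and the interval must be chosen away from $u$. The key quantitative input is a bound on zeros. Fixing a base point $z_0$ with $f(z_0)\ne 0$ and applying Jensen's formula on a disc of radius $e(u+2+|z_0|)$ about $z_0$, the number $n$ of zeros of $f$ in $\overline{D(u,2)}$ satisfies $n\le \alpha e\,(u+1)+O(1)\le N/e+O(1)$; the factor $e^2$ in the hypothesis is placed exactly so that this count is $\le\theta N$ with $\theta=1/e<1$, leaving room below the critical exponent $1$. I would then write $f=B\,g$, where $B(z)=\prod_{k=1}^n(z-a_k)$ collects these zeros and $g$ is analytic and non-vanishing on $\overline{D(u,2)}$, and control the two factors separately on a carefully chosen interval.

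For $B$, the integral bound $\int_{u-1}^{u+1}\sum_{k}\log^{+}\frac{1}{|x-a_k|}\,dx\le 2n$ (each zero contributing at most $2$), together with a Chebyshev/averaging argument and the derivative estimate $\bigl|(\log|B|)'\bigr|\le n/\delta$, produces an interval $I\subset[u-1,u+1]$ of length $1/(2N^2)$, lying at distance $\ge\delta\sim 1/N$ from every $a_k$, on which $|B(x)|\ge e^{-\theta' N}$ with $\theta'<1$. For $g$, since $\log|g|$ is harmonic and bounded above on $\overline{D(u,2)}$ by $O(N)$, a Borel--Carath\'eodory estimate followed by Harnack's inequality bounds $\log|g|$ from below on $[u-1,u+1]$, keeping the exponent strictly below $1$. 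Multiplying the two lower bounds yields $|f(x)|\ge Ce^{-N}$ on $I$, and since in this regime $|f(u)|/2$ is dominated by the available slack, in fact $|f(x)|\ge|f(u)|/2+Ce^{-N}$ there.

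The main obstacle is exactly this minimum-modulus step: guaranteeing that $f$ is not uniformly smaller than $e^{-N}$ across the whole interval, i.e. the lower bound on $|g|$, with the accumulated exponent held strictly below $1$. This is where the constant $e^2$ is indispensable, since it forces $n\le N/e$ and thereby keeps each exponent ($\theta$, $\theta'$, and the Harnack contribution) below $1$; a looser constant would let these sum past $1$ and destroy the bound $Ce^{-N}$. A secondary, routine difficulty is matching the two regimes so that the single inequality holds on one interval: this is handled by placing the threshold between the regimes at $|f(u)|\sim e^{N/e^2}$ and absorbing the finitely many remaining bounded cases into the constant $C=C(f)$.
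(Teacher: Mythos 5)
A preliminary remark: the paper does not prove this lemma at all --- it quotes it from \cite[Lemma 7.2.7]{QuQu13} --- so your argument can only be judged on its own terms. Parts of it are sound: the first regime is correct (when $|f(u)|\ge e^{N/e^2}$, the Cauchy bound $|f'|\le C_0e\alpha\,e^{N/e^2}$ on $[u-1,u+1]$ does yield the conclusion on $[u-\tfrac{1}{4N^2},u+\tfrac{1}{4N^2}]$), and in the second regime both the Jensen count $n\le N/e+O(1)$ from a fixed base point and the $\log^{+}$-averaging/Chebyshev selection of an interval on which $|B|\ge e^{-\theta' N}$ are fine. But there are two fatal gaps, and they sit exactly at the step you yourself call ``the main obstacle.''

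First, the two regimes do not meet. The second regime produces, at best, a floor $|f(x)|\ge Ce^{-N}$ on some interval, and you then assert that ``$|f(u)|/2$ is dominated by the available slack.'' That is true only when $|f(u)|\lesssim e^{-N}$, whereas the second regime starts already at $|f(u)|<e^{N/e^2}$. For instance if $|f(u)|=1$, the oscillation argument does not apply (it needs $|f(u)|\gtrsim e^{N/e^2}/N^2$), while a bound $|f(x)|\ge Ce^{-N}$ is far weaker than the required $|f(x)|\ge\tfrac12+Ce^{-N}$; the whole range $e^{-N}\ll|f(u)|\ll e^{N/e^2}$ is covered by neither case. Second, the decisive estimate --- the lower bound $\log|g|\ge-\theta''N$, $\theta''<1$, for the zero-free factor via Borel--Carath\'eodory and Harnack --- is unjustified: both tools need an anchor point at which $\log|g|$ is bounded \emph{below}, and you have none, since the only hypothesis is $f(u)\neq0$ with no quantitative lower bound (and $|g(u)|=|f(u)|/|B(u)|$ inherits the possibly enormous negative exponent of $|f(u)|$). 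Applying Harnack honestly to $\sup\log|g|-\log|g|\ge0$ from the point $u$ gives $\log|g(x)|\ge 3\log|g(u)|-2\sup\log|g|$, i.e.\ the exponent of $|f(u)|$ is multiplied by a constant larger than $1$; that can never produce an absolute floor $Ce^{-N}$.

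This second gap is not repairable, because the statement as printed (with $+\,Ce^{-N}$) is false. Sketch of a counterexample: let $f$ be the canonical product whose zeros are the integers, except that for $u_j=2^j$ ($j$ large) the $2m_j+1$ integers at distance $\le m_j:=\lceil 60u_j/\log u_j\rceil$ from $u_j$ are replaced by a zero of multiplicity $2m_j+1$ at $u_j$ itself (and symmetrically at $-u_j$). The zero-counting function is still $n(t)=t+O(t/\log t)$, and since $\max_{|z|=r}|f|=|f(ir)|=\exp\int_0^\infty\log(1+r^2/t^2)\,dn(t)$, the function $f$ still has exponential type $\pi$. However, clustering the zeros creates dips: a direct estimate gives $|f(x)|\le e^{-2m_j\log m_j+O(m_j)}\le e^{-100u_j}$ for all $x$ with $|x-u_j|\le 2$ and $j$ large. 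Take $u=u_j+\tfrac12$ (so $f(u)\neq0$), $\alpha=4>\pi$, and the smallest admissible $N$, which is $\le 31u_j$: then every $x\in[u-1,u+1]$ satisfies $|f(x)|\le e^{-100u_j}<Ce^{-N}$ for large $j$, whatever the constant $C>0$, so no interval as in the conclusion exists. What is true --- and is all the paper ever uses, since in the proof of Lemma \ref{lem:exponentialtype2} the term $Ce^{-N}$ is simply discarded --- is the variant with $-\,Ce^{-N}$ in place of $+\,Ce^{-N}$, equivalently that $|f|\ge\tfrac12|f(u)|$ on some interval of length $1/(2N^2)$. A proof of that version must work near the point where $|f|$ attains its maximum on $[u-1,u+1]$, using a Bernstein/two-constants type estimate showing that $|f|$ cannot fall off its maximum too fast on the scale $1/N^2$ (the Jensen bound $O(N)$ on nearby zeros is what forbids faster oscillation); it cannot go through a minimum-modulus bound anchored at $u$, nor through an absolute floor $Ce^{-N}$, which the example above rules out.
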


A proof of this lemma can be found in \cite[Lemma 7.2.7]{QuQu13}.

\begin{lemma}\label{lem:exponentialtype2}
Let {$f:\C \to \C$} be a non-zero {entire} function of exponential type and let $\delta\in (0,1)$. Assume that
$$
	\limsup_{x\to+\infty}\frac{\log|f(x)|}x\geq 0.
$$
Then
$$
	\sum_{n\geq 1}\frac{|f(\log n)|}{n^{1-\delta}}=+\infty.
$$
\end{lemma}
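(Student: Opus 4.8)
The plan is to exploit Lemma \ref{lem:exponentialtype1} to locate, for arbitrarily large $u$, short intervals on which $|f|$ stays bounded below, and then to observe that---because $\log n$ becomes dense while $n\approx e^{u}$ is enormous---such a short interval around $u$ captures a great many integers $n$, whose combined contribution to the series is already huge. All terms of the series being nonnegative, it suffices to exhibit infinitely many disjoint blocks each contributing more than $1$.

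First I would unwind the hypothesis. Since $\limsup_{x\to+\infty}\frac{\log|f(x)|}{x}\ge 0$, fixing $\eta=\delta/2\in(0,\delta)$ there is a sequence $u_j\to+\infty$, which I may take to satisfy $u_{j+1}>u_j+2$, such that $|f(u_j)|\ge e^{-\eta u_j}>0$. Fixing $\alpha$ greater than the exponential type of $f$ and setting $N_j=\lceil e^2\alpha(u_j+1)\rceil$, an application of Lemma \ref{lem:exponentialtype1} at $u=u_j$ yields an interval $I_j\subset[u_j-1,u_j+1]$ of length $1/(2N_j^2)$ on which $|f(x)|\ge |f(u_j)|/2\ge\tfrac12 e^{-\eta u_j}$, discarding the nonnegative term $Ce^{-N_j}$.

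The key counting step is to estimate the number of integers $n$ with $\log n\in I_j$. Writing $I_j=[a_j,\,a_j+1/(2N_j^2)]$ with $a_j\ge u_j-1$, such $n$ fill the range $[e^{a_j},\,e^{a_j+1/(2N_j^2)}]$, so using $e^t-1\ge t$ their number is at least $e^{a_j}\bigl(e^{1/(2N_j^2)}-1\bigr)-1\ge \frac{e^{u_j-1}}{2N_j^2}-1$, which grows like $e^{u_j}/u_j^2$. For each such $n$ one has $n\le e^{u_j+1}$, hence $n^{-(1-\delta)}\ge e^{-(1-\delta)(u_j+1)}$, and therefore the partial sum of $|f(\log n)|\,n^{-(1-\delta)}$ over integers with $\log n\in I_j$ is bounded below by a constant multiple of $\frac{e^{u_j}}{N_j^2}\cdot e^{-\eta u_j}\cdot e^{-(1-\delta)(u_j+1)}$. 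Computing the exponent gives $u_j(\delta-\eta)-(1-\delta)$, so this lower bound is of order $e^{(\delta-\eta)u_j}/u_j^2\to+\infty$, precisely because $\eta<\delta$. Since the spacing $u_{j+1}>u_j+2$ makes the intervals $I_j$ pairwise disjoint, the corresponding index sets are disjoint and each block eventually contributes more than $1$; with all terms nonnegative this forces $\sum_{n\ge 1}\frac{|f(\log n)|}{n^{1-\delta}}=+\infty$.

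The main obstacle is the counting step: one must verify that although $I_j$ has length only $1/(2N_j^2)$ with $N_j\sim u_j$, the change of variable $n\mapsto\log n$ still deposits on the order of $e^{u_j}/u_j^2$ integers inside it. It is exactly this exponential abundance of integers---beating the polynomial factor $N_j^{-2}$, the weight $n^{-(1-\delta)}$, and the lower bound $e^{-\eta u_j}$---that drives the divergence, and the whole argument hinges on having chosen $\eta$ strictly below $\delta$.
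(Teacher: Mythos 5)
Your proof is correct and takes essentially the same route as the paper's: both extract a sequence $u_j\to\infty$ with $|f(u_j)|\ge e^{-\delta u_j/2}$ from the limsup hypothesis, apply Lemma \ref{lem:exponentialtype1} to produce short intervals of length $1/(2N_j^2)$ on which $|f|\ge\tfrac12 e^{-\delta u_j/2}$, and then count the exponentially many integers $n$ with $\log n$ in those intervals to show the block sums blow up. The only differences are cosmetic: you take $N_j\sim u_j$ where the paper takes $N_j=\lfloor x_j\rfloor^2$, and you conclude via disjoint blocks each eventually contributing more than $1$, while the paper simply notes the block sums are unbounded.
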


\begin{proof}
Choose $\alpha$  greater than the type of $f$ and $0<\delta<1.$
Let $(x_j)$ be a sequence of  positive  real numbers going to infinity such that {for every $j\geq 1,$}\ $|f(x_j)|\geq e^{-\delta x_j/2},$  $x_j> 1$ and $\lfloor x_j\rfloor^2\ge e^2\alpha(x_j+1).$
Let $N_j=\lfloor x_j\rfloor^2$. By Lemma \ref{lem:exponentialtype1}, we can find an interval $I_j=\left[y_j,y_j+1/(2N_j^2)\right]$ contained in $[x_j-1,x_j+1]$ such that $x\in I_j\implies |f(x)|\geq e^{-\delta x_j/2}/2.$

By choosing $x_1$ sufficiently large, we may assume that
\begin{equation}\label{>1}
	e^{y_j+\frac{1}{2N_j^2}}-e^{y_j} > 1.
\end{equation}
Indeed,
$$
	e^{y_j+\frac{1}{2N_J^2}}-e^{y_j} \ge
		\min_{t\in[x_j-1,x_j+1]}\left(e^{t+\frac{1}{2N_j^2}}-e^t\right) =
		e^{x_j-1+\frac{1}{2N_j^2}}-e^{x_j-1} \ge e^{x_j-1+(x_j-1)^{-4}/2}-e^{x_j-1} =
				\frac{e^{(x_j-1)^{-4}/2}-1}{e^{1-x_j}},
$$
which tends to $+\infty$ by l'H\^opital's rule, establishing (\ref{>1}).
Now,
\begin{eqnarray*}
\sum_{\log n\in I_j}\frac{|f(\log n)|}{n^{1-\delta}}&\gg&\sum_{\log n\in I_j}\frac{e^{-\delta x_j/2}}{e^{(1-\delta)y_j}}\\
&\gg&e^{-(1-\delta/2)y_j} \textrm{card}\big\{n:\ \log n\in I_j\big\}\\
& =&e^{-(1-\delta/2)y_j}
	\textrm{card}\big\{n:\ n\in\big[e^{y_j},e^{y_j+\frac{1}{2N_j^2}}\big]\big\}\\
&\gg&e^{-(1-\delta/2)y_j}\left(e^{y_j+\frac 1{2N_j^2}}-e^{y_j}\right)
	\quad \mbox{by} \quad (\ref{>1})\\
&\gg&\frac{e^{\delta y_j/2}}{y_j^4}.
\end{eqnarray*}
But this last quantity goes to infinity as $j$ goes to infinity.
\end{proof}

We also need a result from {the} geometry of function spaces. A proof of it can be found in \cite[Lemma 11.11]{BM09}.

\begin{lemma}\label{lem:geometry}
Let $X$ be a locally convex topological vector space and let $(x_n)_{n\geq 1}$ be a sequence in $X$. Assume that $\sum_{n=1}^{+\infty}|\langle x^*,x_n\rangle|=+\infty$ for every  nonzero continuous linear functional $x^*\in X^*$. Then, for  every  $N\in\N$, the set $\left\{\sum_{n=N}^M a_n x_n;\ M\geq N, |a_n|\leq 1\right\}$ is dense in $X$.
\end{lemma}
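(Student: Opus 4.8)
The plan is to argue by contradiction, using the Hahn--Banach separation theorem together with the fact that the candidate set is convex and balanced. Fix $N\in\N$ and abbreviate $S=\left\{\sum_{n=N}^M a_n x_n;\ M\geq N,\ |a_n|\leq 1\right\}$. The first thing I would record is that $S$ is convex and balanced. Indeed, if $y=\sum_{n=N}^{M_1}a_nx_n$ and $y'=\sum_{n=N}^{M_2}b_nx_n$ both lie in $S$ (padding the shorter expansion with zero coefficients so that both run up to $M=\max(M_1,M_2)$), then for $0\le t\le1$ one has $ty+(1-t)y'=\sum_{n=N}^{M}\big(ta_n+(1-t)b_n\big)x_n$ with each coefficient of modulus $\le1$, and similarly $\lambda y\in S$ whenever $|\lambda|\le1$. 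Hence $\overline S$ is a closed, convex, balanced subset of $X$ which contains $0$.

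Suppose, toward a contradiction, that $S$ is not dense, so that there is some $x_0\in X\setminus\overline S$. Since $\overline S$ is closed and convex and $\{x_0\}$ is compact and convex, the geometric Hahn--Banach theorem in a locally convex space produces a continuous $\R$-linear functional $u\colon X\to\R$ and a real number $\gamma$ with $u(y)\le\gamma<u(x_0)$ for all $y\in\overline S$. Passing to the associated continuous $\C$-linear functional $x^*\in X^*$ determined by $\Re\langle x^*,\cdot\rangle=u$, I obtain a \emph{nonzero} $x^*$ (nonzero because $u(x_0)>\gamma\ge u(0)=0$) satisfying $\Re\langle x^*,y\rangle\le\gamma$ for every $y\in S$. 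I would then use the balancedness of $S$ to upgrade this one-sided bound on the real part to a bound on the modulus: for any $y\in S$ and any unimodular scalar $\lambda$ we have $\lambda y\in S$, so $\Re\big(\lambda\langle x^*,y\rangle\big)=\Re\langle x^*,\lambda y\rangle\le\gamma$; choosing $\lambda$ so that $\lambda\langle x^*,y\rangle=|\langle x^*,y\rangle|$ gives $|\langle x^*,y\rangle|\le\gamma$. Thus $\sup_{y\in S}|\langle x^*,y\rangle|\le\gamma<\infty$.

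The final step is to compute this supremum explicitly and extract the contradiction. For each $n$ I would pick the coefficient $a_n$ of modulus one that aligns the phase of $\langle x^*,x_n\rangle$ (taking $a_n=0$ when $\langle x^*,x_n\rangle=0$); the resulting $y=\sum_{n=N}^M a_n x_n\in S$ satisfies $\langle x^*,y\rangle=\sum_{n=N}^M|\langle x^*,x_n\rangle|$. Letting $M\to\infty$ shows that $\sup_{y\in S}|\langle x^*,y\rangle|=\sum_{n=N}^{\infty}|\langle x^*,x_n\rangle|$, which by the previous paragraph is finite. But $x^*\neq0$, so the hypothesis gives $\sum_{n=1}^{\infty}|\langle x^*,x_n\rangle|=+\infty$, and discarding the finitely many terms with $n<N$ from a divergent series of nonnegative reals leaves a divergent series, i.e.\ $\sum_{n=N}^{\infty}|\langle x^*,x_n\rangle|=+\infty$. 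This contradiction forces $\overline S=X$, so $S$ is dense, as claimed.

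The only genuinely delicate point, and the step I would write out most carefully, is the separation argument: the passage from the real-linear functional $u$ furnished by Hahn--Banach to the complex-linear $x^*$ with $\Re\langle x^*,\cdot\rangle=u$, followed by the use of balancedness to turn $\Re\langle x^*,\cdot\rangle\le\gamma$ into $|\langle x^*,\cdot\rangle|\le\gamma$. Everything else is a routine verification. In the real-scalar case the argument is identical, with $\lambda\in\{+1,-1\}$ playing the role of the unimodular rotation.
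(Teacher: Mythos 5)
Your proof is correct and follows essentially the same route as the paper, which does not prove the lemma itself but refers to \cite[Lemma 11.11]{BM09}: that proof is precisely your argument, namely Hahn--Banach separation of a point from the closed convex balanced set $\overline S$, the use of balancedness to upgrade $\Re\langle x^*,y\rangle\le\gamma$ to $|\langle x^*,y\rangle|\le\gamma$, and the phase-alignment choice $a_n=\overline{\langle x^*,x_n\rangle}/|\langle x^*,x_n\rangle|$ showing that a separating functional would force $\sum_{n\ge N}|\langle x^*,x_n\rangle|\le\gamma<\infty$, contradicting the hypothesis. The delicate points (nonvanishing of $x^*$ from $u(x_0)>\gamma\ge u(0)=0$, and discarding the finitely many terms with $n<N$) are all handled correctly.
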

\begin{proof}[Proof of Theorem \ref{thm:approxdirichlet}]
Let $K\subset\overline{\C_-}$ be compact with connected complement, let $P\in D_a(\C_+)$ be a Dirichlet polynomial, $g\in A(K)$, $\sigma>0$ and $\e>0$. We set $\delta=\sigma/2$ and we choose $N$ bigger than the degree of $P$ and such that
$$\sum_{n\geq N}\frac 1{n^{1+\delta}}<\e.$$
Let $\phi$ be a nonzero continuous linear functional on $A(K)$. By the Hahn-Banach and the Riesz representation theorems, there exists a nonzero (complex) measure $\mu$ with support contained in $K$ such that, for any $u\in A(K)$,
$$\langle \phi,u\rangle=\int_K ud\mu.$$
We intend to show that $\sum_{n\geq 1}|\langle \phi,n^{-s}\rangle|=+\infty$. Observe that
$$\phi(n^{-s})=\int_K e^{-s\log n}d\mu=\mathcal L_\mu(\log n),$$
where $\mathcal L_\mu$ is the Laplace transform of $\mu$. Observe that $\mathcal L_\mu$ is nonzero. Indeed,
differentiating under the integral sign and evaluating at zero,
$$\mathcal L_\mu^{(k)}(0)=(-1)^k \langle \phi, z^k\rangle.$$
Since $K$ has connected complement, polynomials are dense in $A(K)$ and since $\phi\neq 0$, it follows that $\mathcal L_\mu^{(k)}(0)\neq 0$ for some $k$. On the other hand, it is well-known (see for instance \cite[Lemma 11.15]{BM09}) that $\mathcal L_\mu$ is an entire function of exponential type and that
$$\limsup_{x\to+\infty}\frac{\log|\mathcal L_\mu(x)|}x\geq 0.$$
Applying Lemma \ref{lem:exponentialtype2}, we get
$$\sum_{n\geq 1}|\langle \phi,n^{-s-1+\delta}\rangle|=\sum_{n\geq 1}\frac{|\mathcal L_\mu(\log n)|}{n^{1-\delta}}=+\infty.$$
By Lemma \ref{lem:geometry}, this implies that $\left\{\sum_{n=N}^M a_n n^{-(1-\delta)}n^{-s};\ M\geq N,\ |a_n|\leq 1\right\}$ is dense in $A(K)$. In particular, there exists $h_0=\sum_{n=N}^M a_n n^{-(1-\delta)}n^{-s}$, $|a_n|\leq 1$ such that
$$\|h_0-(g-P)\|_{C(K)}<\e.$$
Setting $h=h_0+P$, we thus have $\|h-g\|_{C(K)}<\e$ and
$$\|h-P\|_\sigma=\|h_0\|_\sigma\leq \sum_{n=N}^{+\infty}\frac{1}{n^{1-\delta+\sigma}}<\e.$$
We have shown the theorem for the case that $f$ is a Dirichlet polynomial. The general case follows trivially.
\end{proof}

\begin{remark}
In the above theorem, translating $K$ and reducing $\sigma$ if necessary, we may in fact choose $K\subset\{s: \Re s<\sigma\}.$
Using Cauchy's formula, we deduce that, for every compact $L\subset\overline{\mathbb C_-},$ for every $f\in D_a(\mathbb C_+)$, $g$ entire, $\sigma,\e>0$, and $N\in\mathbb N$, there exists a Dirichlet polynomial $h$ such that
$$\sup_{0\leq l\leq N}\|h^{(l)}-g^{(l)}\|_{C(L)}<\e\textrm{ and }\|h-f\|_\sigma<\e.$$
\end{remark}

A consequence of Theorem \ref{thm:approxdirichlet} is the following result.

\begin{theorem}\label{UniversalDirichletseriesth}

There exists a Dirichlet series D,  absolutely convergent in $\C_+,$ with partial sums $S_N D,$ such that, for every entire function g there exists a sequence $( \lambda_n),$ so
that for every $\ell$ in \{0,1,2,\ldots\} the derivatives
$S_{\lambda_n}D^{(\ell)}$ converge to $g^{(\ell)}$
uniformly on each compact subset of the
closed left half plane, as n goes to
infinity.   The set of such Dirichlet series is dense and a $G_\delta$ in the space of Dirichlet series absolutely convergent in $\C_+,$  and it  contains a
{dense vector} subspace of it, apart from 0.
\end{theorem}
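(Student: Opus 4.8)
The plan is to prove this by a Baire category argument in the Fréchet space $X=D_a(\C_+)$, deriving the density of the relevant open sets directly from the Remark following Theorem \ref{thm:approxdirichlet}, then upgrading residuality to a dense subspace via the standard genericity machinery for universal series. First I would record the ambient structure. The seminorms $\|\cdot\|_\sigma$ are monotone in $\sigma$ (for $\sigma_1<\sigma_2$ one has $\|\cdot\|_{\sigma_2}\le\|\cdot\|_{\sigma_1}$), so the countable cofinal family $\{\|\cdot\|_{1/k}\}_{k\ge1}$ induces the topology and $X$ is a separable Fréchet space; in particular Baire's theorem applies and the Dirichlet polynomials with coefficients in $\Q+i\Q$ form a countable dense subset. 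I would fix the exhaustion of $\overline{\C_-}$ by the compact sets $L_m=\{s:\Re s\le 0,\ |s|\le m\}$, each of which has connected complement, and a countable dense sequence $(g_j)$ in the space of entire functions with the topology of uniform convergence of all derivatives on compacta — for instance polynomials with coefficients in $\Q+i\Q$, which is legitimate because uniform convergence of entire functions on compacta propagates automatically to all derivatives by Cauchy's estimates.

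Second, I would encode universality as a countable intersection. For $j,m,p\in\N$ put
\[
E(j,m,p)=\Big\{D\in X:\ \exists\,N\ \text{such that}\ \max_{0\le\ell\le m}\big\|S_ND^{(\ell)}-g_j^{(\ell)}\big\|_{C(L_m)}<1/p\Big\}.
\]
Each $E(j,m,p)$ is open: every coefficient functional $D\mapsto a_n$ is continuous since $|a_n|\le n^{\sigma}\|D\|_\sigma$, so for fixed $N$ the map $D\mapsto\big(S_ND^{(\ell)}|_{L_m}\big)_{0\le\ell\le m}$ is continuous into a product of copies of $C(L_m)$, the strict inequality defines an open set, and $E(j,m,p)$ is the union of these over $N$. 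Thus $\mathcal U:=\bigcap_{j,m,p}E(j,m,p)$ is a $G_\delta$. To see each $E(j,m,p)$ is dense — and here Theorem \ref{thm:approxdirichlet} does the real work — I would take a basic neighbourhood $\{D:\|D-D_0\|_\sigma<\e\}$ of an arbitrary $D_0$ and apply the Remark after Theorem \ref{thm:approxdirichlet} with $L=L_m$, $f=D_0$, $g=g_j$, derivative order $m$ and tolerance $\min(\e,1/p)$, obtaining a Dirichlet polynomial $h$ with $\|h-D_0\|_\sigma<\e$ and $\max_{0\le\ell\le m}\|h^{(\ell)}-g_j^{(\ell)}\|_{C(L_m)}<1/p$; choosing $N=\deg h$ gives $S_Nh=h$, so $h\in E(j,m,p)$. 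By Baire, $\mathcal U$ is a dense $G_\delta$.

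Third, I would check that every $D\in\mathcal U$ is universal in the stated sense. Given an arbitrary entire $g$, I would choose $g_{j_n}\to g$ in the all-derivatives topology and, using $D\in E(j_n,m_n,p_n)$ with $m_n,p_n\to\infty$, extract indices $\lambda_n$; a diagonal argument then yields $S_{\lambda_n}D^{(\ell)}\to g^{(\ell)}$ uniformly on every compact subset of $\overline{\C_-}$ and for every $\ell$ simultaneously, since any prescribed compact set and derivative order are eventually dominated by $L_{m_n}$ and $m_n$. Hence $\mathcal U$ is exactly the set in the statement, and it is a dense $G_\delta$.

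Finally, for the dense subspace I would exploit that $\mathcal U$ is stable under multiplication by nonzero scalars (rescale the target) and under addition of any Dirichlet polynomial $P$ (replace the target $g$ by the entire function $g-P$ and use $S_N(D+P)=S_ND+P$ for $N\ge\deg P$), and combine this with the residuality already obtained. From here the conclusion follows from the standard genericity construction for universal series (compare \cite{BM09} and the abstract theory of universal series): one builds inductively a linearly independent sequence $(u_k)\subset\mathcal U$ whose terms are close to a fixed dense sequence of Dirichlet polynomials, so that $\mathrm{span}\{u_k\}$ is dense, while arranging that every nonzero finite linear combination again lies in $\mathcal U$. I expect the genuine difficulty to be concentrated precisely in this last requirement: whereas density and the $G_\delta$ property drop out of Baire's theorem once Theorem \ref{thm:approxdirichlet} is available, forcing \emph{all} nonzero combinations to stay universal calls for the inductive block construction, using scale-invariance to reduce the coefficient vectors to the compact unit sphere and choosing each $u_k$ to act on frequency blocks disjoint from those already used, so that along suitable partial sums a single universal summand dominates.
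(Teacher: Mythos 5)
Your proposal is correct and takes essentially the same approach as the paper: a Baire category argument in $D_a(\C_+)$ whose open dense sets are built from the Remark following Theorem \ref{thm:approxdirichlet}, with universality for arbitrary entire targets recovered by approximating through the countable dense family, and the dense vector subspace deferred to the abstract theory of universal series (\cite{BaGroNePa}). The only cosmetic differences are that you control derivatives via $\max_{0\le\ell\le m}$ of sup-norms where the paper uses the equivalent weighted metric $d_m$, and that you spell out the openness and density verifications that the paper leaves implicit.
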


\begin{proof}
Let $K_m=[-m,0]\times[-m,m]$ and let $(g_k)$ be an enumeration of all polynomials with coefficients in $\Q+i\Q$. We endow the space $H(\C)$
 of entire functions with the following distance:
$$d_m(f,g)=\sum_{l=0}^{+\infty}2^{-l}\min\left(1,\|f^{(l)}-g^{(l)}\|_{C(K_m)}\right).$$
For $n,m,m,s\geq 1$, we define
$$E(n,k,m,s)=\{h\in D_a(\C_+): d_m(S_n h,g_k)<1/s\}.$$
By Remark 3.5, the set $\bigcup_n E(n,k,m,s)$ is dense and it is also clearly open in $D_a(\C_+)$.
We claim that each $D$ in $ \bigcap_{k,m,s}\bigcup_n E(n,k,m,s)$ satisfies the conclusion of the theorem. Indeed, let $g$ be any entire function.
For each $m\geq 1$, there exists $k_m$ such that $d_m(g,g_{k_m})<4^{-m}/2$. Then,
since $D\in \bigcup_n E(n,k_m,m,2\cdot 4^m)$, we know from the triangle inequality that there exists an integer $\lambda_m$
such that $d_m(S_{\lambda_m}D,g)<4^{-m}$. By the definition of $d_m$, this implies that, for any $l\leq m$,
$$\|S_{\lambda_m}D^{(l)}-g^{(l)}\|_{C(K_m)}<2^{-m}.$$
Since any compact set of $\overline{\mathbb C_-}$ is contained in all $K_m$ for any $m$ large enough, we are done. The construction of a dense vector subspace follows from the methods of \cite{BaGroNePa}.
 \end{proof}

This result improves \cite[Theorem 10]{BaGroNePa} where the set $K$ is assumed to be admissible, while in Theorem \ref{UniversalDirichletseriesth} of the present $K$ is any compact subset of $\overline{\C_-}$ with connected complement.

\begin{remark}
We should mention that S. Gardiner and M. Manolaki have recently been applying potential theory
techniques to study properties of universal Dirichlet series in \cite{GM}. This promising approach has yielded results that  apparently
cannot be answered using standard complex analysis.
\end{remark}

.


\section{Mergelyan and Runge type theorems in the Dirichlet setting }

Another consequence of Theorem \ref{thm:approxdirichlet} is the following result.
\begin{lemma}\label{Dirichlet:Mergelyan}
  Let $K \subset \C$ be compact with connected complement, $f\in A(K)$ and $\e>0$. Then there exists  a Dirichlet polynomial $P(s)=\sum_{n=1}^{N}a_n \frac{1}{n^s}$ such that
  $\|f-P\|_{C(K)}<\e$.
\end{lemma}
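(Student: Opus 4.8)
The plan is to reduce the statement to Theorem~\ref{thm:approxdirichlet} by a horizontal translation; the only genuinely new feature is that $K$ is now an arbitrary compact subset of $\C$ rather than one contained in the closed left half plane $\overline{\C_-}$. Since $K$ is compact, the real parts of its points are bounded, so I would fix a real number $a\geq\sup_{s\in K}\Re s$ and set $\tilde K=\{s-a:\ s\in K\}$. Then $\tilde K$ is compact, its complement is connected (translation is a homeomorphism of $\C$ carrying $\C\setminus K$ onto $\C\setminus\tilde K$), and by the choice of $a$ we have $\Re(s-a)\leq 0$ for every $s\in K$, so that $\tilde K\subset\overline{\C_-}$.

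Next I would transport the datum $f$ to $\tilde K$ by defining $g:\tilde K\to\C$, $g(w)=f(w+a)$. Because $f\in A(K)$ is continuous on $K$ and holomorphic on its interior, and because $w\mapsto w+a$ is a biholomorphism sending $\tilde K$ onto $K$ and the interior of $\tilde K$ onto that of $K$, we get $g\in A(\tilde K)$. I then apply Theorem~\ref{thm:approxdirichlet} to the compact set $\tilde K\subset\overline{\C_-}$, the target $g\in A(\tilde K)$, the Dirichlet series $0\in D_a(\C_+)$, any $\sigma>0$, and the given $\e$; discarding the (here irrelevant) conclusion $\|h-0\|_\sigma<\e$, this yields a Dirichlet polynomial $h(w)=\sum_{n=1}^N b_n n^{-w}$ with $\|h-g\|_{C(\tilde K)}<\e$.

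Finally I would translate back. The key observation is that a horizontal shift preserves the class of Dirichlet polynomials: for $s\in\C$,
$$
	h(s-a)=\sum_{n=1}^N b_n\, n^{-(s-a)}=\sum_{n=1}^N \big(b_n n^{a}\big)\, n^{-s},
$$
so $P(s):=h(s-a)=\sum_{n=1}^N a_n n^{-s}$ with $a_n=b_n n^{a}$ is again a Dirichlet polynomial of the required form. Since $s-a\in\tilde K$ and $g(s-a)=f(s)$ for $s\in K$, we obtain
$$
	\|f-P\|_{C(K)}=\sup_{s\in K}\big|f(s)-h(s-a)\big|=\sup_{w\in\tilde K}\big|g(w)-h(w)\big|=\|g-h\|_{C(\tilde K)}<\e,
$$
which is the assertion. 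The entire analytic content sits in Theorem~\ref{thm:approxdirichlet}; I expect no real obstacle beyond bookkeeping, the single point deserving care being that the rescaling $a_n=b_n n^{a}$ keeps us inside the Dirichlet polynomials, which holds precisely because $n^{-(s-a)}=n^{a}n^{-s}$.
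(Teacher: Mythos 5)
Your proof is correct and is essentially the paper's own route: the paper states Lemma~\ref{Dirichlet:Mergelyan} as a direct consequence of Theorem~\ref{thm:approxdirichlet}, leaving precisely the translation argument you spell out (shift $K$ into $\overline{\C_-}$, apply the theorem with the zero Dirichlet series, and shift back using $n^{-(s-a)}=n^{a}n^{-s}$). Your write-up just makes the implicit bookkeeping explicit.
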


It follows easily from Lemma \ref{Dirichlet:Mergelyan} and Runge's theorem on polynomial approximation that if  $K \subset \C$ is an arbitrary compact set, then the set of uniform limits of Dirichlet polynomials on $K$ is the same as the set $P(K)$ of uniform limits of (algebraic) polynomias on $K.$ The family $P(K)$ is known to consist precisely of   
the set of functions on $K$ having an extension in $A(\widehat{K})$ where $\widehat{K}$ is the complement of the unbounded component of $K^c=\C\setminus K$. Indeed, if a function $f$ defined on $K$ is the uniform limit of a sequence $\{p_n\}$ of polynomials, then the sequence $\{p_n\}$ is uniformly Cauchy on $K$ and so, by the maximum principle it is also uniformly Cauchy on $\widehat K.$ Thus, the sequence  $\{p_n\}$ converges uniformly on $\widehat K$ to some function $\widehat f,$ which is in $A(\widehat K)$ and whose restriction to $K$ is $f.$ Conversely, if $f$ extends to a function $\widehat f\in A(\widehat K),$ then, by Mergelyan's theorem, $\widehat f$ is the uniform limit on $\widehat K$ of a sequence of polynomials and {\em a fortioru} $f$ is the uniform limit on $K$ of the same sequence of polynomials.

Another easy consequence of     Lemma \ref{Dirichlet:Mergelyan} is the following result.

\begin{theorem}
  Let $\Omega \subset \C$ be an open simply connected set and $H(\Omega)$ be the space of holomorphic functions on $\Omega$ endowed with the topology of uniform convergence on compacta. Then, Dirichlet polynomials are dense in $H(\Omega)$.
\end{theorem}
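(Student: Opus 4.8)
The plan is to deduce the theorem directly from Lemma \ref{Dirichlet:Mergelyan}, whose role is precisely to replace an arbitrary target by a Dirichlet polynomial once the approximation has been reduced to a compact set with connected complement. Recall that the topology of $H(\Omega)$ is generated by the seminorms $\|\cdot\|_{C(L)}$ as $L$ ranges over the compact subsets of $\Omega$, so it suffices to show that for every $f\in H(\Omega)$, every compact $L\subset\Omega$ and every $\e>0$ there is a Dirichlet polynomial $P$ with $\|f-P\|_{C(L)}<\e$.

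First I would exploit the simple connectivity of $\Omega$ to invoke Runge's theorem in the form: polynomials are dense in $H(\Omega)$ for the topology of uniform convergence on compacta. Thus, given $f$, $L$ and $\e$, I can choose an ordinary (algebraic) polynomial $q$ with $\|f-q\|_{C(L)}<\e/2$. The point of this reduction is that $q$ is entire, hence lies in $A(K)$ for \emph{any} compact set $K$; in particular I am now free to enlarge the set on which I approximate, which is what will make Lemma \ref{Dirichlet:Mergelyan} applicable.

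Next I would choose $R>0$ so large that $L\subset K:=\{s\in\C:\abs{s}\le R\}$. The closed disk $K$ is compact and its complement is connected, so Lemma \ref{Dirichlet:Mergelyan} applies to $q\in A(K)$ and yields a Dirichlet polynomial $P(s)=\sum_{n=1}^{N}a_n n^{-s}$ with $\|q-P\|_{C(K)}<\e/2$. Since $L\subset K$, restriction gives $\|q-P\|_{C(L)}<\e/2$, and the triangle inequality then yields $\|f-P\|_{C(L)}\le\|f-q\|_{C(L)}+\|q-P\|_{C(L)}<\e$. As $f$, $L$ and $\e$ were arbitrary, this establishes that Dirichlet polynomials are dense in $H(\Omega)$.

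The only place where the hypotheses genuinely enter is the first step: Runge's theorem produces uniform polynomial approximants on compacta exactly because $\Omega$ is simply connected, and this is what legitimizes the reduction to entire functions. I expect this to be the crux; everything afterwards is a routine application of Lemma \ref{Dirichlet:Mergelyan} on a disk. An alternative that bypasses Runge is to work directly with the polynomial hull: for simply connected $\Omega$ the hull $\widehat L$ of a compact set $L\subset\Omega$ is again contained in $\Omega$ and has connected complement, so one may apply Lemma \ref{Dirichlet:Mergelyan} to $f|_{\widehat L}\in A(\widehat L)$ in a single step and restrict to $L$. With this route the main obstacle shifts to verifying $\widehat L\subset\Omega$, which is the standard topological characterization of simple connectivity.
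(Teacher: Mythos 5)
Your proof is correct and matches the paper's intent: the paper states this theorem without proof, as an ``easy consequence'' of Lemma \ref{Dirichlet:Mergelyan}, and both of your routes are valid instantiations of that reduction (reduce to a compact set with connected complement, then apply the lemma). The detour through Runge's theorem in your main argument is harmless but unnecessary; your alternative via the polynomial hull $\widehat{L}\subset\Omega$ is the most direct version and is presumably what the authors had in mind.
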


Mergelyan's theorem, initially valid for compact sets with connected complement, can be extended to compact sets $K$ such that  $K^c$ has a finite numbers of components, where the approximation will not be achieved by polynomials but by rational functions ( See  \cite[Exercise 1, Chapter 20,  p.394]{Rudin}). In analogy to this result we prove the following one.

\begin{theorem}\label{LaurentDirichlet}
  Let $\Omega \subset \C$ be a domain bounded by a finite number of disjoint Jordan curves. Let $\{z_1,\ldots, z_N\}$ be a set containing exactly one point from every bounded component of $\overline{\Omega}^c$. Let $f \in A(\Omega)$ and $\e>0$. Then there exist Dirichlet polynomials $P_0,P_1,\ldots, P_N$ so that the ``rational Dirichlet" function
  $$R(s)=P_0(s)+P_1\left(\frac{1}{s-z_1}\right)+\ldots+P_N\left(\frac{1}{s-z_N}\right)$$
  satisfies
  $$ \|f-R\|_{C(\overline{\Omega})}<\e.$$
\end{theorem}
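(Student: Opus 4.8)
The plan is to factor the problem into two successive approximations: first replace $f$ by a genuine rational function whose poles sit exactly at the prescribed points $z_1,\dots,z_N$ (plus a possible pole at infinity), and then replace each building block of that rational function by a Dirichlet polynomial of the required shape, invoking Lemma \ref{Dirichlet:Mergelyan} as the engine. Writing $K=\overline\Omega$, which is compact since $\Omega$ is a bounded finitely connected domain, I note that $K^c$ has exactly $N$ bounded components (the holes enclosed by the inner Jordan curves) together with one unbounded component, one $z_j$ in each bounded component.

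\emph{Step 1 (rational approximation).} Since $f\in A(\Omega)=A(K)$ and $K^c$ has finitely many components, the rational version of Mergelyan's theorem recalled just above the statement (\cite[Exercise 1, Chapter 20, p.394]{Rudin}) produces a rational function with poles off $K$ approximating $f$ uniformly on $K$ to within $\e/2$. By Runge's pole-pushing theorem the poles lying in the unbounded component can be pushed to $\infty$, and those in the $j$-th bounded component can be concentrated at $z_j$. Decomposing the resulting rational function into its polynomial part and its principal parts, and writing each principal part as a polynomial in $1/(s-z_j)$ with no constant term, I obtain algebraic polynomials $q_0,q_1,\dots,q_N$ with
$$
	\left\| f-\Big(q_0(s)+\sum_{j=1}^N q_j\big(\tfrac{1}{s-z_j}\big)\Big)\right\|_{C(\overline\Omega)}<\frac\e2.
$$

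\emph{Step 2 (Dirichletization of each block).} It now suffices to approximate each of the $N+1$ summands by a Dirichlet polynomial of the admissible form, each to within $\e/(2(N+1))$. For the polynomial part I would enclose $\overline\Omega$ in a closed disk $M_0$; as $q_0$ is entire it lies in $A(M_0)$, and $M_0$ has connected complement, so Lemma \ref{Dirichlet:Mergelyan} yields a Dirichlet polynomial $P_0$ with $\|q_0-P_0\|_{C(M_0)}<\e/(2(N+1))$, hence the same bound on $\overline\Omega$. For the $j$-th block I set $w=1/(s-z_j)$: since $z_j$ lies in a bounded component of $K^c$ it is at positive distance from the compact set $\overline\Omega$, so the image $L_j=\{1/(s-z_j):s\in\overline\Omega\}$ is again compact. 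Enclosing $L_j$ in a closed disk $M_j$ and applying Lemma \ref{Dirichlet:Mergelyan} to the entire function $q_j\in A(M_j)$ gives a Dirichlet polynomial $P_j$ with $\|q_j-P_j\|_{C(M_j)}<\e/(2(N+1))$; substituting back, $\|q_j(1/(\cdot-z_j))-P_j(1/(\cdot-z_j))\|_{C(\overline\Omega)}<\e/(2(N+1))$. Summing and applying the triangle inequality, $R(s)=P_0(s)+\sum_{j=1}^N P_j(1/(s-z_j))$ satisfies $\|f-R\|_{C(\overline\Omega)}<\e$.

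The one place that genuinely requires care — and which I regard as the crux — is the passage through the substitution $w=1/(s-z_j)$. One must verify that $L_j$ is genuinely compact (guaranteed precisely because $z_j\notin\overline\Omega$) and, more importantly, that approximating the entire function $q_j$ in the variable $w$ by a Dirichlet polynomial $P_j(w)=\sum_n a_n n^{-w}$ and then composing produces exactly a term of the admissible form $P_j(1/(s-z_j))=\sum_n a_n n^{-1/(s-z_j)}$. Here it is essential that $q_j$ is entire: this is what lets me disregard the possibly disconnected complement of $L_j$ and simply fatten $L_j$ to a disk, where Lemma \ref{Dirichlet:Mergelyan} applies verbatim. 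Everything else is bookkeeping with the triangle inequality.
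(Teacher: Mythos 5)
Your proof is correct, but it takes a genuinely different route from the paper's. The paper never passes through rational functions: it applies the Laurent decomposition of \cite{CoNePa} directly to $f$, writing $f=f_0+f_1+\cdots+f_N$ with $f_j$ holomorphic on $V_j^c$, where $V_0,\ldots,V_N$ are the components of $\big(\C\cup\{\infty\}\big)\setminus\overline\Omega$ (with $\infty\in V_0$) and $f_j(\infty)=0$ for $j\geq 1$; it then applies Lemma \ref{Dirichlet:Mergelyan} to each piece $f_j$ itself --- to $f_0$ on the compact set $V_0^c$, and to $f_j(\tfrac1w+z_j)$ on the compact, connected-complement image of $V_j^c$ under the inversion $w=1/(z-z_j)$ --- and finishes with the triangle inequality. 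You instead discharge all the function theory onto classical results (rational Mergelyan for complements with finitely many components, Runge pole-pushing, partial fractions), so that Lemma \ref{Dirichlet:Mergelyan} is only ever invoked for polynomials on closed disks. The trade-off: your version uses the lemma in its weakest form and makes the ``Dirichletization'' step essentially trivial, but it leans on two classical theorems that the paper only mentions in passing before the statement; the paper's version requires the full strength of Lemma \ref{Dirichlet:Mergelyan} (arbitrary $A(K)$ functions on compacta with connected complement, which is exactly what Theorem \ref{thm:approxdirichlet} supplies) together with the Laurent decomposition, but avoids rational approximation entirely. Note also that the point you single out as the crux --- the substitution $w=1/(s-z_j)$ and the compactness of the image because $z_j\notin\overline\Omega$ --- is precisely the inversion trick the paper uses as well, only applied there to the Laurent piece $f_j$ rather than to a polynomial $q_j$.
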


\begin{proof}
 Let  $V_j$, $j=0,\ldots,N$ be the components of $\big(\C\cup\{\infty\}\big)\setminus \overline{\Omega}$ and $\infty \in V_0$.
By the well known ``Laurent Decomposition" (see for example \cite{CoNePa}),
$$f=f_0+f_1+\ldots+ f_N$$
where $f_j \in H(V_j^c)$ for $j=0,\ldots,N$ and $f_1(\infty)=\ldots= f_N(\infty)=0$.

The complement of $V_0^c$ is $V_0$ which is connected. So by Lemma \ref{Dirichlet:Mergelyan} there exists a Dirichlet polynomial $P_0$ so that
$$\|P_0-f\|_{C(\overline{\Omega})}\leq \|P_0-f\|_{C(V_0^c)}<\frac{\e}{N+1}.$$
If we do the inversion $z\to \frac{1}{z-z_j}$ ($j=1,\ldots,N$) then the image of $V_j^c$ is a compact set with connected component. Thus, by   Lemma \ref{Dirichlet:Mergelyan} the function $f_j(\frac{1}{w}+z_j)$ can be approximated by a Dirichlet polynomial $P_j(w)$ so that
$$
\|f_j(z)-P_j(\frac{1}{z-z_j})\|_{C(\overline{\Omega})}\leq \|f_j(z)-P_j(\frac{1}{z-z_j})\|_{C(V_j^c)}<\frac{\e}{N+1}.$$
The triangle inequality yields the result.
\end{proof}

For an arbitrary compact set $K\subset\C,$ we denote by $A(K)$ the class of functions $f:K\rightarrow\C,$ that are continuous on $K$ and holomorphic on the interior of $K.$ 
For a compact set $K\subset\C,$ clearly the set $P(K)$ of functions $f:K\rightarrow\C,$ which can be uniformly approximated by polynomials, is contained in the class $A(K).$ 
Mergelyan's theorem gives a characterization of those compact sets $K\subset\C$ such that $P(K)=A(K).$  Let us call such sets Mergelyan sets. Mergelyan's theorem then states that $K$ is  a Mergelyan set if and only if $K^c$ is connected. The analogous problem for rational approximation was solved by Vitushkin  (see, for example \cite{BG}). For compact $K\subset\C,$ denote by $R(K)$ the set of functions $f:K\rightarrow\C$ which are uniform limits of rational functions. Again, clearly $R(K)\subset A(K).$ 
Vitushkin characterized, in terms of continuous analytic capacity, those compact $K\subset\C$ for which $R(K)=A(K).$ Let us call such sets Vitushkin sets. Since rational functions are well-defined on the Riemann sphere, we could also consider uniform approxmation of functions $f:K\rightarrow\C,$ defined on compact subsets $K\subset\C\cup\{\infty\},$ by rational functions whose poles lie outside of $K.$  However, for simplicity, we shall confine our study to compact subsets of $\C.$

An example of a Vitushkin set is the closure $\overline\Omega$ of a domain $\Omega\subset\C$ bounded by finitely many disjoint Jordan curves. We can use this example to formulate an analogue of the previous theorem for arbitrary Vitushkin sets.
If $P$ is a Dirichlet polynomial and $a\in\C$ we define the rational Dirichlet function $P_a(s)=P(1/(s-a))$ and if $a=\infty,$ we set $P_\infty=P.$

\begin{theorem}
Let $K \subset\C$ be a Vitushkin compactum. Let $Z$ be a set which meets every component of $\overline\C\setminus K.$ Let $f \in A(K)$ and $\e>0$. Then, there exist finitely many points $z_1,\ldots,z_n\in Z$ and Dirichlet polynomials $P_{z_1},\ldots, P_{z_N}$ so that the ``rational Dirichlet" function
$$
	R(s)=P_{z_1}(s)+\ldots+P_{z_N}(s)
$$
 satisfies
  $$ \|f-R\|_{C(K)}<\e.$$
\end{theorem}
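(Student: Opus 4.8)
The plan is to combine Vitushkin's theorem with Lemma \ref{Dirichlet:Mergelyan}, in the spirit of Theorem \ref{LaurentDirichlet}. Since $K$ is a Vitushkin compactum we have $R(K)=A(K)$, so I would first choose a rational function $\rho$ with poles off $K$ such that $\|f-\rho\|_{C(K)}<\e/2$. The function $\rho$ has only finitely many poles, each lying in some component of $\overline\C\setminus K$. Performing a partial fraction decomposition and grouping the principal parts (together with the polynomial part, which I attach to the unbounded component $V_0\ni\infty$) according to these components, I can write $\rho=\sum_V\rho_V$, a finite sum over those components $V$ that actually contain a pole, where each $\rho_V$ is a rational function all of whose poles lie in $V$. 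For each such $V$ I fix a point $z_V\in Z\cap V$, which exists because $Z$ meets every component of $\overline\C\setminus K$. It then suffices to approximate each $\rho_V$ on $K$ to within $\e/(2N)$, where $N$ is the number of relevant components, by a rational Dirichlet function $P_{z_V}$ centered at $z_V$, and to put $R=\sum_V P_{z_V}$.

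Fix one component $V$ with its point $z_V$, and suppose first that $z_V\in\C$. I would pass to the inversion $T(s)=1/(s-z_V)$, a homeomorphism of $\overline\C$ sending $z_V$ to $\infty$, set $K_V=T(K)$ (a compact subset of $\C$), and write $\rho_V=g_V\circ T$, so that $g_V$ is a rational function in the variable $w=1/(s-z_V)$. The key observation is that, since $z_V\in V$ and $V$ is connected, the image $T(V)$ is a connected subset of $\overline\C\setminus K_V$ containing $\infty$, hence $T(V)$ lies in the unbounded component of $\overline\C\setminus K_V$. As all poles of $\rho_V$ lie in $V$, all poles of $g_V$ (including any pole at $\infty$) lie in $T(V)\cup\{\infty\}$, and therefore in that single unbounded component.

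Consequently $g_V$ is holomorphic on a neighborhood of the polynomial hull $\widehat{K_V}$, the complement of the unbounded component of $\C\setminus K_V$, since no pole of $g_V$ meets $\widehat{K_V}$. Now $\widehat{K_V}$ is compact with connected complement and $g_V\in A(\widehat{K_V})$, so Lemma \ref{Dirichlet:Mergelyan} furnishes a Dirichlet polynomial $Q_V$ with $\|g_V-Q_V\|_{C(\widehat{K_V})}<\e/(2N)$, and a fortiori $\|g_V-Q_V\|_{C(K_V)}<\e/(2N)$. Undoing the change of variables, the rational Dirichlet function $P_{z_V}(s)=Q_V(1/(s-z_V))$ satisfies $\|\rho_V-P_{z_V}\|_{C(K)}<\e/(2N)$. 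In the remaining case $z_V=\infty$ (the representative of the unbounded component), no inversion is needed: $\rho_V$ is then holomorphic on a neighborhood of $\widehat K$, so Lemma \ref{Dirichlet:Mergelyan} directly yields a Dirichlet polynomial $Q_V$ with $P_\infty=Q_V$ approximating $\rho_V$ on $K$ to within $\e/(2N)$. Summing over the $N$ components, the triangle inequality gives $\|f-R\|_{C(K)}\le\|f-\rho\|_{C(K)}+\sum_V\|\rho_V-P_{z_V}\|_{C(K)}<\e/2+\e/2=\e$, with $R=\sum_V P_{z_V}$.

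The hard part will be dealing with the multiply connected complement: unlike in Theorem \ref{LaurentDirichlet}, the transformed set $K_V=T(K)$ need not have connected complement, since it inherits all of the holes of $K$, so Lemma \ref{Dirichlet:Mergelyan} cannot be applied to $g_V$ on $K_V$ itself. The observation that unlocks the argument is the geometric point of the second paragraph: choosing the inversion center $z_V$ \emph{inside} the pole-carrying component $V$ sweeps every pole of the transformed piece $g_V$ into the single unbounded complementary component of $K_V$. Passing to the polynomial hull $\widehat{K_V}$ then fills in only the bounded holes, none of which contains a pole, so that it simultaneously restores a connected complement and keeps $g_V$ holomorphic there, which is exactly what makes the Mergelyan-type Lemma \ref{Dirichlet:Mergelyan} applicable.
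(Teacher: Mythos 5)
Your proof is correct, but it takes a genuinely different route from the paper's. Both arguments begin the same way, invoking Vitushkin's theorem ($R(K)=A(K)$) to replace $f$ by a rational function $\rho$, pole-free on $K$, within $\e/2$. From there the paper takes a shortcut: it fattens $K$ to a domain $\Omega$ bounded by finitely many disjoint Jordan curves, chosen to contain $K$ and to exclude the poles of $\rho$ from $\overline\Omega$, and then simply quotes Theorem \ref{LaurentDirichlet} on $\overline\Omega$; the decomposition and the inversions are hidden inside that earlier theorem. You instead work on $K$ itself: you split $\rho$ by partial fractions into pieces $\rho_V$ indexed by the pole-carrying components $V$ of $\overline\C\setminus K$, invert each piece about a point $z_V\in Z\cap V$, and observe that this choice of center pushes every pole of the transformed piece into the unbounded component of the complement of $T(K)$, so that passing to the polynomial hull $\widehat{T(K)}$ restores a connected complement while keeping the piece holomorphic there --- precisely the hypotheses of Lemma \ref{Dirichlet:Mergelyan}. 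The paper's reduction is shorter, but your version buys something the paper's proof glosses over: the theorem demands that the centers lie in the prescribed set $Z$, whereas the points supplied by Theorem \ref{LaurentDirichlet} are one per bounded component of $\overline\Omega^c$ and need not belong to $Z$ (the $Z$-point of a component of $\overline\C\setminus K$ may well be swallowed by $\Omega$); making the paper's argument literally deliver the statement requires choosing $\Omega$ with additional care. Your proof builds the constraint $z_V\in Z$ in from the start, at the cost of redoing, in partial-fraction rather than Laurent form, the decomposition that Theorem \ref{LaurentDirichlet} encapsulates.
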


\begin{proof} By Vitushkin's theorem, there is a rational function $g$ such that $ \|f-g\|_{C(K)}<\e/2.$ Since rational functions composed with M\"obius transformations are again rational functions, we may assume that $K\subset\C.$ Let $\Omega\subset\C$ be a domain bounded by finitely many disjoint Jordan curves, such that $\Omega$ contains $K$ and all poles of $h$ lie outside of $\overline\Omega.$ By the previous theorem, there is a rational Dirichlet function $R$ such that $ \|g-R\|_{C(\overline\Omega)}<\e/2.$ The triangle inequality yields the desired approximation.
\end{proof}

The usual notation for the set of uniform limits of rational funcitons having no poles on $K$ is $R(K)$ and we have defined $K$ to be a Vitushkin set if $R(K)=A(K).$ Let us denote by $RD(K)$ the uniform limits of Dirichlet rational functions having no singularities on $K.$
We have shown that $RD(K)=A(K)$ if $R(K)=A(K),$ that is, if the compact set $K$ is a Vitushkin
compactum. In fact, this condition is also necessary, since for the converse we may approximate Dirichlet polynomials by
regular polynomials using Runge's  theorem.

Examples. Theorem \ref{LaurentDirichlet} asserts that the closure  $\overline\Omega$ of a domain  $\Omega$  bounded by finitely many disjoint Jordan curves is  a Vitushkin set. There are many other interesting examples of Vitushkin sets. For example, if the diameters of the complementary components of $K$ (even though there may be infinitely many such components) are bounded away from zero or if the compact set has area measure zero, then it is a Vitushkin set.

If we consider any arbitrary domain  $\Omega \subset \C$, then there is an exhausting family of compact subsets
$K_m$, $m=1,2,\ldots$ of  $\Omega$, such that every $K_m$ is bounded by finite number of disjoint Jordan curves. Thus, $K_m$ can be chosen as  $\overline{\Omega}$  in Theorem \ref{LaurentDirichlet} and Theorem \ref{LaurentDirichlet} can be applied. Let $A$ be a set containing a point in each component of
$\big(\C\cup\{\infty\}\big)\setminus \overline{\Omega}$. (In fact let $A_m=\{z_{1,m},\ldots, z_{N_m,m}\}$ be a set containing exactly one point from each bounded component of the set $K_m^c$. Then it suffices to take $A=\cup_{m=1}^\infty A_m$). Then one easily derives from Theorem \ref{LaurentDirichlet} the following Runge type theorem.

\begin{theorem}
  Let  $\Omega \subset \C$ be a domain and let $A=\cup_{m=1}^\infty A_m$ be as above. We consider $H(\Omega)$ endowed with the topology of uniform convergence  on compacta. Then ``rational Dirichlet" functions of the form 
$$
	R(s)=P_0(s)+\sum_{j=1}^{N}P_j\left(\frac{1}{s-z_j}\right),$$
$N \in \N$, $z_j \in A$ and $P_j(s)=\sum_{n=1}^{N_j}a_{n,j}\frac{1}{n^s}$ are dense in $H(\Omega)$.
\end{theorem}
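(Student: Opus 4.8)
The plan is to deduce this from Theorem \ref{LaurentDirichlet} by a standard exhaustion argument. Since $H(\Omega)$ carries the topology of uniform convergence on compacta, and since we may take the $K_m$ to form an increasing exhaustion of $\Omega$ with the property that every compact $K\subset\Omega$ is contained in some $K_m$, it suffices to prove the following local statement: for every $f\in H(\Omega)$, every index $m$, and every $\e>0$, there is a rational Dirichlet function $R$ of the prescribed form with $\|f-R\|_{C(K_m)}<\e$. Indeed, a basic neighborhood of $f$ in $H(\Omega)$ is cut out by a compact $K$ and a tolerance $\e$, and choosing $m$ with $K\subset K_m$ transfers control of $f-R$ from $K_m$ to $K$.

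First I would fix $f\in H(\Omega)$, an index $m$, and $\e>0$. Writing $K_m=\overline{\Omega_m}$, where $\Omega_m$ is a domain bounded by finitely many disjoint Jordan curves, I note that $f$ is holomorphic on the open set $\Omega$, which contains $K_m$; hence the restriction of $f$ belongs to the class $A(\overline{\Omega_m})$ required by Theorem \ref{LaurentDirichlet}. Moreover, by construction the set $A_m=\{z_{1,m},\ldots,z_{N_m,m}\}$ contains exactly one point from each bounded component of $K_m^c$, which is precisely the combinatorial data that theorem demands.

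Next I would apply Theorem \ref{LaurentDirichlet} to the domain $\Omega_m$, the function $f$, the points $z_{1,m},\ldots,z_{N_m,m}$, and the tolerance $\e$. This yields Dirichlet polynomials $P_0,P_1,\ldots,P_{N_m}$ so that
$$
	R(s)=P_0(s)+\sum_{j=1}^{N_m}P_j\left(\frac{1}{s-z_{j,m}}\right)
$$
satisfies $\|f-R\|_{C(K_m)}<\e$. Because each $z_{j,m}\in A_m\subset A$, the function $R$ has exactly the form asserted in the theorem, and the inclusion $K\subset K_m$ gives $\|f-R\|_{C(K)}\le\|f-R\|_{C(K_m)}<\e$, establishing density.

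The step requiring the most care is the choice of the point set $A$, and this is really the only nontrivial point. The exhausting compacta $K_m$ may possess bounded complementary components that are not holes of $\Omega$ at all but merely portions of $\Omega$ not yet absorbed by $K_m$; consequently a naive choice of one representative per bounded component of $(\C\cup\{\infty\})\setminus\overline{\Omega}$ would fail to supply the poles $z_{j,m}$ needed to invoke Theorem \ref{LaurentDirichlet} at each stage. Taking $A=\bigcup_{m=1}^\infty A_m$ furnishes a representative for every such transient hole, which is exactly what lets the exhaustion go through; the remainder of the argument is the routine reduction to the single-compactum case already handled by Theorem \ref{LaurentDirichlet}.
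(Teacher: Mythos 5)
Your proposal is correct and follows essentially the same route as the paper: the paper likewise exhausts $\Omega$ by compacta $K_m=\overline{\Omega_m}$ bounded by finitely many disjoint Jordan curves, takes $A_m$ to contain exactly one point from each bounded component of $K_m^c$, and obtains density by applying Theorem \ref{LaurentDirichlet} on each $K_m$, exactly as you do. Your closing observation about why one must take $A=\bigcup_{m=1}^\infty A_m$ rather than one representative per complementary component of $\overline{\Omega}$ (the holes of $K_m$ can be transient, lying inside $\Omega$) is precisely the content of the paper's parenthetical remark preceding the theorem.
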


In addition to considering uniform approximation by Dirichlet polynomials on compacta, one can raise the question of uniform approximation on (unbounded) closed sets. We restrict our comments to the case where the closed set is a line.
Let $E$ be a straight line in $\C.$ Denote by $X(E)$ the family of uniform limits of Dirichlet polynomials on $E.$

1) If $E=\R,$ it is easy to see that $X(\R)$ coincides wth the set of Dirichlet polynomials.  Indeed, the only Dirichlet polynomials which are bounded on the real line are the constant polynomials. Thus, if a sequence $\{P_j\}$ of Dirichlet polynomials is uniformly Cauchy on the real line, then for some natural number $n_0,$ we have that $P_{n_0}-P_j$ is a constant $c_j,$ for each $j\ge n_0,$ so $P_{n_0}-P_j$ converges to a constant $c.$ Thus, if $P_j$ converges uniformly on the real line to a function $f,$ then $f$ is the Dirichlet polynomial 
$P_{n_0}-c.$ The same for $E$ any line which is not vertical.

2) If $E=i\R,$ then by Theorem \ref{CharactuniflimDirichletpoly}, one can see that functions in  $X(i\R)$ are the boundary values of functions in  $\mathcal{A}( \C_+).$ That is,  $\varphi\in X(i\R)$ if and only if there exists a function $f\in \mathcal{A}( \C_+),$ such that
$$
	\varphi(i\tau) = \lim_{\sigma\rightarrow 0}f(\sigma+i\tau),  \quad -\infty<\tau<+\infty.
$$


\section{Spherical Approximation}

In this section we try to extend the previous results when the uniform approximation is not meant with respect to the usual Euclidean distance but it is meant with respect to the chordal distance $\chi$.

We recall that the chordal distance $\chi$ is defined as follows.
$$
	\chi(a,b) =
\left\{
		\begin{array}{lll}
			\frac{|a-b|}{\sqrt{1+|a|^2}\sqrt{1+|b|^2}}		&	\mbox{for}	&	a,b \in \C,\\
			\frac{1}{\sqrt{1+|a|^2}}	&	\mbox{for}	&	a \in \C, b=\infty.
		\end{array}
\right.
$$
Obviously $\chi(a,b)\leq |a-b|$ for $a,b \in \C$.

Let us start with a compact set $K\subset \C$ with connected complement. By Mergelyan's Theorem every Dirichlet polynomial can be approximated by polynomials uniformly in $|.|$. Since  $\chi(a,b)\leq |a-b|$  for all $a,b  \in \C$, it follows that every Dirichlet polynomial can be approximated  $\chi$-uniformly on $K$ by polynomials. Lemma \ref{Dirichlet:Mergelyan} implies that every polynomial can be approximated on $K$ by Dirichlet polynomials, Since $\chi(a,b)\leq |a-b|$, it follows that every polynomial can be approximated $\chi$-uniformly on $K$ by Dirichlet polynomials. Thus we have the following proposition.

\begin{proposition}\label{chordal=uniform}
  Let $K\subset \C$ be a compact set with connected complement. Then the set of $\chi$-uniform limits on $K$ of Dirichlet polynomials coincides with the set of $\chi$-uniform limits on $K$ of polynomials.
\end{proposition}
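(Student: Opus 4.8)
The plan is to reduce the claimed equality of the two sets to two one‑directional approximation facts, each obtained by pairing a known uniform (Euclidean) approximation with the pointwise inequality $\chi(a,b)\le|a-b|$, and then to glue them together using the triangle inequality for the chordal metric. Throughout, write $d_\chi(u,v)=\sup_{z\in K}\chi(u(z),v(z))$ for the sup‑chordal distance between functions $u,v:K\to\C\cup\{\infty\}$. Since $\chi$ is a metric on the Riemann sphere $\C\cup\{\infty\}$, $d_\chi$ is a genuine metric on such functions, so that a $\chi$‑uniform limit of $\chi$‑uniform limits is again a $\chi$‑uniform limit; in particular both sets appearing in the statement are $d_\chi$‑closed.

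First I would record the two facts that are essentially already isolated in the paragraph preceding the statement. Because $K$ has connected complement, every Dirichlet polynomial, being entire and hence a member of $A(K)$, is by Mergelyan's theorem a uniform‑in‑$|\cdot|$ limit on $K$ of ordinary polynomials; since $\chi(a,b)\le|a-b|$, this upgrades at once to the statement that every Dirichlet polynomial is a $\chi$‑uniform limit on $K$ of polynomials. Conversely, Lemma \ref{Dirichlet:Mergelyan} shows that every polynomial (again a member of $A(K)$) is a uniform‑in‑$|\cdot|$ limit on $K$ of Dirichlet polynomials, and the same inequality $\chi\le|\cdot|$ makes this a $\chi$‑uniform approximation. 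Thus each Dirichlet polynomial lies in the $\chi$‑closure of the polynomials, and each polynomial lies in the $\chi$‑closure of the Dirichlet polynomials.

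It then remains only to pass from these statements about individual (Dirichlet) polynomials to the equality of the two full closure sets, and this is exactly where the triangle inequality for $\chi$ does the work. Let $f$ be a $\chi$‑uniform limit on $K$ of polynomials and fix $\e>0$: choose a polynomial $p$ with $d_\chi(f,p)<\e/2$, then, by the first fact above, a Dirichlet polynomial $Q$ with $d_\chi(p,Q)<\e/2$, whence $d_\chi(f,Q)\le d_\chi(f,p)+d_\chi(p,Q)<\e$. As $\e$ is arbitrary, $f$ is a $\chi$‑uniform limit of Dirichlet polynomials. The reverse inclusion is verified by the symmetric argument, interchanging the roles of polynomials and Dirichlet polynomials and invoking the second fact instead.

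The argument is entirely soft, so there is no real technical obstacle; the only point deserving a moment's attention — and hence the main (mild) subtlety — is the legitimacy of the triangle inequality across the value $\infty$, i.e.\ that $\chi$ is a bona fide metric on all of $\C\cup\{\infty\}$, so that $d_\chi$ inherits the triangle inequality even when an intermediate or limiting value equals $\infty$. This is classical, as $\chi$ is induced by the stereographic embedding of the sphere into $\R^3$, and everything else is the bookkeeping of two triangle‑inequality estimates.
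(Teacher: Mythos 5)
Your proof is correct and takes essentially the same route as the paper: the paper's own argument (the paragraph immediately preceding the proposition) consists precisely of the two facts you isolate, namely Mergelyan's theorem combined with $\chi(a,b)\le|a-b|$ in one direction, and Lemma \ref{Dirichlet:Mergelyan} combined with the same inequality in the other. Your only addition is to make explicit the final triangle-inequality/closure step that the paper leaves implicit, which is a harmless (indeed welcome) elaboration rather than a different method.
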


If $K,$ is a closed Jordan domain,  the set of $\chi$-uniform limits on $K$ of polynomials is precisely the family of mappings
$\widetilde{A}(K)=\{f:K\to \C\cup\{\infty\}\ \text{continuous,  } \ f\equiv \infty \ \text{or}\ f(K^{\circ})\subset \C \ \text{and} \ f_{|K^{\circ}}\ \text{ holomorphic}\}.$ See  \cite{FraNePa} and the references therein.
Combining this  with the above proposition, we obtain the following result, which was proved in \cite{NeIPa}.

\begin{theorem}\label{xhilimitDirichletpolynomials}
  Let $K$ be the closure of a Jordan domain. Then the set of $\chi$-uniform limits on $K$ of Dirichlet polynomials is precisely $\widetilde{A}(K).$
\end{theorem}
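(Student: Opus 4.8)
The plan is to obtain the theorem as an immediate combination of Proposition \ref{chordal=uniform} with the known description of the $\chi$-uniform closure of the ordinary polynomials on a Jordan domain. The genuine analytic content is already contained in those two facts, so the only thing I need to supply is the verification that the hypotheses of both results apply to $K=\overline\Omega$, after which the conclusion is a matter of chaining two identifications.

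First I would check that $K$, the closure of a Jordan domain $\Omega$, has connected complement, since this is exactly the hypothesis under which Proposition \ref{chordal=uniform} is stated. By the Jordan curve theorem the complement in $\C$ of the bounding Jordan curve has exactly two components, the bounded one being $\Omega$ and the unbounded one being its exterior. As $K=\Omega\cup\partial\Omega$, the set $\C\setminus K$ is precisely that unbounded exterior component, which is connected. This elementary topological observation is what licenses the use of the proposition.

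With the connectedness of $\C\setminus K$ in hand, Proposition \ref{chordal=uniform} tells me that the set of $\chi$-uniform limits on $K$ of Dirichlet polynomials coincides with the set of $\chi$-uniform limits on $K$ of ordinary polynomials. On the other hand, when $K$ is the closure of a Jordan domain, this latter set is exactly $\widetilde A(K)$; this is the classical description recalled just before the statement (see \cite{FraNePa} and the references therein). Composing the two equalities gives that the set of $\chi$-uniform limits on $K$ of Dirichlet polynomials equals $\widetilde A(K)$, as asserted.

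I do not expect any real obstacle here: the Mergelyan-type input and the chordal estimate $\chi(a,b)\le |a-b|$ are already folded into Proposition \ref{chordal=uniform} (itself resting on Lemma \ref{Dirichlet:Mergelyan}), and the polynomial characterization of $\widetilde A(K)$ is quoted from the literature. The theorem is therefore best viewed as a corollary, and the only point demanding care is the topological step above, which ensures that $K$ actually satisfies the connected-complement hypothesis so that the passage between Dirichlet polynomials and ordinary polynomials is valid.
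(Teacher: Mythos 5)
Your proposal is correct and is essentially the paper's own argument: the paper obtains the theorem by combining Proposition \ref{chordal=uniform} with the quoted fact from \cite{FraNePa} that on the closure of a Jordan domain the $\chi$-uniform limits of ordinary polynomials are exactly $\widetilde{A}(K)$, just as you do. Your explicit Jordan-curve-theorem check that $\C\setminus K$ is connected is a reasonable (if routine) addition that the paper leaves implicit.
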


For an arbitrary compact subset $K\subset\C,$ let $\widetilde{A}(K)$ be the family of continuous mappings $f:K\to \C\cup\{\infty\},$ such that  for every component $V$ of $K^{\circ}$ either $f_{|V}\equiv \infty$ or $f(V)\subset \C$ and $f_{|V}$ is holomorphic. In case $K$ is a closed Jordan domain, this is consistent with our previous definition of  $\widetilde{A}(K).$ It is well known that if $f:K\rightarrow\C\cup\{\infty\}$ is a
$\chi$-unifom limit of functions holomorphic on $K,$ then $f\in \widetilde{A}(K).$ In particular, this is the case if $f$ is a $\chi$-uniform limit on $K$ of Dirichlet polynomials on $K$ (see for example \cite{FraNePa}). The converse has been proved in the particular case where $K$ is the closure of a Jordan domain according to Theorem \ref{xhilimitDirichletpolynomials}, but it is open in the general case of compact sets sets $K$ with connected complement (see \cite{FraNePa}). Thus we have the

\begin{question}
 Let $K\subset \C$ be a compact set with $K^c$ connected. Let $f\in\widetilde{A}(K)$ and  $\e>0$; is it then true that there exists a Dirichlet polynomial  $P(s)=\sum_{n=1}^{N}a_n \frac{1}{n^s}$ such that $\chi(f(s),P(s))<\e$ for all $s \in K$?
 \end{question}

By Proposition \ref{chordal=uniform}, this problem is equivalent to that of finding, for every $\e>0,$ an algebraic polynomial $P$ such that   $\chi(f(s),P(s))<\e$ for all $s \in K.$ This, in turn, is equivalent to finding, for every $\e>0,$  a rational function $R,$ pole-free on $K,$ such that  $\chi(f(s),R(s))<\e$ for all $s \in K.$ Indeed, one direction is obvious, since every polynomial is a rational function pole-free on $K.$ Suppose, conversely, that there is a rational function $R$ pole-free on $K$ such that $\chi(f(s),R(s))<\e/2,$ for every $s\in K.$ By Runge's theorem, there is a polynomial $P,$ such that $|R(s)-P(s)|<\e/2,$ for every $s\in K.$ Hence, for every $s\in K,$ we have
$$
	\chi(f(s),P(s))\le\chi(f(s),R(s))+\chi(R(s),P(s))<\chi(f(s),R(s))+|R(s)-P(s)|<\e,
$$
which establishes the claim.

Combining Theorem \ref{xhilimitDirichletpolynomials} with a Laurent Decomposition (\cite{CoNePa}), we obtain the following Theorem \ref{ChiLimitLaurentDirichlet}. We omit the  proof, which is identical with the proof of Theorem 3.3 in \cite{FraNePa}, with the only difference that instead of polynomials we use Dirichlet polynomials.

\begin{theorem}\label{ChiLimitLaurentDirichlet}
  Let $\Omega\subset \C$ be a domain bounded by a finite set of disjoint Jordan curves. Let $K=\overline{\Omega}$ and $A$ be a set containing a point from every bounded component of $\C\setminus K$. Let  $f:K\to \C\cup\{\infty\}$ be a continuous mapping. Then the following are equivalent.
  \begin{enumerate}
    \item The function $f$ is the $\chi$-uniform limit on $K$ of a sequence of functions of the form $P_0(s)+\sum_{j=1}^{N}P_j(\frac{1}{s-z_j})$ where $P_j$ are Dirichlet polynomials and $z_j \in A$, $j=1,\ldots, N$, $N\in \N$ and $P_0$ is a Dirichlet polynomial.
    \item The function $f$ is the $\chi$-uniform limit on $K$ of a sequence of functions of the form $P_0(s)+\sum_{j=1}^{N}P_j(\frac{1}{s-z_j})$ where  $z_j \in \C\setminus K$, $j=1,\ldots, N$, $N\in \N$ and all $P_j$   are Dirichlet polynomials.
    \item $f\in\widetilde{A}(K).$
  \end{enumerate}
\end{theorem}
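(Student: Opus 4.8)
The plan is to prove Theorem \ref{ChiLimitLaurentDirichlet} by reducing the three-way equivalence to the single-component case already established in Theorem \ref{xhilimitDirichletpolynomials}, exactly as in \cite[Theorem 3.3]{FraNePa} but with Dirichlet polynomials in place of ordinary polynomials. The implications $(1)\Rightarrow(2)$ and $(3)\Rightarrow(1)$ are the substantive ones; $(2)\Rightarrow(3)$ is standard because any $\chi$-uniform limit on $K$ of functions holomorphic on $K$ lies in $\widetilde A(K)$, and the approximants in $(2)$ are meromorphic with poles off $K$, hence holomorphic on a neighborhood of $K$. The implication $(1)\Rightarrow(2)$ is trivial since the points in $A$ are a special choice of points in $\C\setminus K$.

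The heart of the argument is $(3)\Rightarrow(1)$. First I would handle the Laurent decomposition of a given $f\in\widetilde A(K)$. Let $V_0,V_1,\ldots,V_N$ be the components of $(\C\cup\{\infty\})\setminus\overline\Omega$ with $\infty\in V_0$, and fix $z_j\in A\cap V_j$ for $j=1,\ldots,N$. When $f$ is finite-valued (takes values in $\C$ on all of $K$), one decomposes $f=f_0+f_1+\cdots+f_N$ with $f_j\in H(V_j^c)$ and $f_j(\infty)=0$ for $j\geq 1$, precisely as in the proof of Theorem \ref{LaurentDirichlet}. The complement of $V_0^c$ is the connected set $V_0$, so $f_0$ may be approximated $\chi$-uniformly (indeed uniformly) on a closed Jordan domain containing $K$ by a Dirichlet polynomial $P_0$ via Theorem \ref{xhilimitDirichletpolynomials}; each inverted piece $w\mapsto f_j(1/w+z_j)$ lives on a compact set with connected complement (the image of $V_j^c$ under $z\mapsto 1/(z-z_j)$), so it is approximated by a Dirichlet polynomial $P_j$, and $P_0(s)+\sum_{j=1}^N P_j(1/(s-z_j))$ gives the required form in $(1)$ after a triangle-inequality estimate.

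The genuinely new difficulty, compared with the Euclidean setting, is handling the components $V$ of $K^\circ$ on which $f\equiv\infty$, since these cannot be approximated by finite-valued Laurent pieces in the Euclidean norm. This is exactly where the chordal distance is essential and where one must replicate the pole-placement trick of \cite{FraNePa}: on a component where $f\equiv\infty$ one approximates $\infty$ $\chi$-uniformly by a rational Dirichlet function having a pole in that component, using that $\chi(a,\infty)=1/\sqrt{1+|a|^2}$ is small when $|a|$ is large. I expect this to be the main obstacle, both because the $\chi$-metric is not subadditive in a norm sense and because one must simultaneously control the behavior on the finite-valued components and on the infinite components. The resolution is to note that $\widetilde A(K)$ splits $K$ into finitely many closures of Jordan subdomains, apply Theorem \ref{xhilimitDirichletpolynomials} on each, and glue the local Dirichlet approximants using the elementary bound $\chi(a,b)\leq|a-b|$ together with the finiteness of the components of $K^c$. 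Since this gluing and pole-placement is line-for-line identical to the proof of \cite[Theorem 3.3]{FraNePa} once every occurrence of ``polynomial'' is read as ``Dirichlet polynomial''—a substitution legitimized by Theorem \ref{xhilimitDirichletpolynomials} and Proposition \ref{chordal=uniform}—I would omit the routine verification and simply invoke that proof, which is precisely what the paper does.
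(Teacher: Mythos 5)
Your proposal is correct and follows exactly the paper's approach: the paper likewise combines Theorem \ref{xhilimitDirichletpolynomials} with the Laurent decomposition of \cite{CoNePa} and then declines to repeat the argument, stating that the proof is identical to that of Theorem 3.3 in \cite{FraNePa} with ``polynomial'' replaced throughout by ``Dirichlet polynomial.'' Your additional remarks on the trivial implications $(1)\Rightarrow(2)$ and $(2)\Rightarrow(3)$ and on the pole-placement needed for components where $f\equiv\infty$ are accurate glosses on what that cited proof does.
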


Since every domain has an exhaustion by compact sets $K_m$, $m=1,2,\ldots$ of the form of the set $K$ in Theorem
 \ref{ChiLimitLaurentDirichlet} we obtain the following Runge type theorem.

 \begin{theorem}
    Let $\Omega$ be a   domain in $\C$ and $A$ be a set containing $\infty$ and a point from each bounded component of\, $\C\setminus \Omega$. Let $f\equiv \infty$ or $f:\Omega \to \C$ be  holomorphic. Then $f$ is the $\chi$-uniform limit on each compact subset of\, $\Omega$ of functions of the form $h=P_0(s)+\sum_{j=1}^{N}P_j(\frac{1}{s-z_j})$, where $P_j$ are Dirichlet polynomials for $j=0,\ldots, N$,  $z_j\in A\setminus\{\infty\}$, for $j=1,\ldots, N$ and $N \in \N$.

   Conversely every $\chi$-uniform limit on each compact subset  of $\Omega$ of functions of the form of $h$ is either constant equal to $\infty$ or holomorphic in $\Omega$.
         \end{theorem}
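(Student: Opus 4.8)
The statement splits into a direct approximation part and its converse, and I would treat them separately, the approximation part carrying essentially all the difficulty. The plan for the direct part is to reduce everything to Theorem \ref{ChiLimitLaurentDirichlet} by exhausting $\Omega$. I would invoke an exhaustion of $\Omega$ by compact sets $K_m$, each of the form $\overline{\Omega_m}$ with $\Omega_m$ a domain bounded by finitely many disjoint Jordan curves, with $K_m\subset\mathrm{int}\,K_{m+1}$ and $\bigcup_m K_m=\Omega$. The one delicate point, which I expect to be the main obstacle, is that this exhaustion must be chosen with the Rudin component-containment property: every component of $(\C\cup\{\infty\})\setminus K_m$ contains a component of $(\C\cup\{\infty\})\setminus\Omega$. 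Granting this, any bounded component $W$ of $\C\setminus K_m$, being bounded, cannot contain the component of $\infty$, so it contains a bounded component of $\C\setminus\Omega$ and hence a point of $A\setminus\{\infty\}$.

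Given a compact $L\subset\Omega$ and $\e>0$, I would pick $m$ with $L\subset K_m$ and, for each of the finitely many bounded components $W$ of $\C\setminus K_m$, select the point $a_W\in A\cap W$ produced above; let $A^{(m)}\subset A$ be this finite set. If $f\equiv\infty$ then $f|_{K_m}\in\widetilde{A}(K_m)$, while if $f$ is holomorphic on $\Omega$ then $f|_{K_m}$ is holomorphic on the interior and continuous, so again $f|_{K_m}\in\widetilde{A}(K_m)$. Applying Theorem \ref{ChiLimitLaurentDirichlet} (the implication (3)$\Rightarrow$(1)) with $K=K_m$ and with $A^{(m)}$ as the admissible set of singularities yields a function $h=P_0(s)+\sum_j P_j(1/(s-z_j))$, with every $z_j\in A^{(m)}\subset A\setminus\{\infty\}$, satisfying $\chi(f,h)<\e$ on $K_m$ and a fortiori on $L$; the term $P_0$ accounts for the singularity at $\infty\in A$. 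For $f\equiv\infty$ one may alternatively take $h$ equal to a large constant $c$, a degree-one Dirichlet polynomial, since $\chi(\infty,c)=1/\sqrt{1+|c|^2}\to 0$. Letting $m$ grow delivers the approximation on every compact subset of $\Omega$.

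It is worth emphasising why the containment property is indispensable and why no ``pole-pushing'' is attempted: unlike in the classical rational case, $P_j(1/(s-z_j))=\sum_n a_{n,j}\,n^{-1/(s-z_j)}$ has an \emph{essential} singularity at $z_j$, so there is no elementary way to transfer a singularity across a complementary component while remaining in the class of Dirichlet rational functions. The whole force of the argument therefore rests on arranging the exhaustion so that the singularities handed back by Theorem \ref{ChiLimitLaurentDirichlet} already lie in the prescribed set $A$; this is exactly what the Rudin exhaustion provides, and it is the only genuinely delicate step.

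For the converse I would show that a spherically locally uniform limit $f$ of the functions $h_k$ is either $\equiv\infty$ or holomorphic on $\Omega$. Each $h_k$ is holomorphic on $\Omega$, its singularities $z_j\in A\setminus\{\infty\}$ lying in $\C\setminus\Omega$, and $f:\Omega\to\C\cup\{\infty\}$ is continuous. Near a point $p$ with $f(p)=\infty$ one has $|h_k|\to\infty$ uniformly, so for large $k$ the function $1/h_k$ is holomorphic and nowhere zero there; since $\chi$ is invariant under $w\mapsto 1/w$, it converges uniformly to $1/f$, and by Hurwitz's theorem $1/f$ is either identically zero (so $f\equiv\infty$ near $p$) or nowhere zero (forcing $f(p)\ne\infty$, a contradiction). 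Thus $f^{-1}(\infty)$ is open, and being also closed in $\Omega$, connectedness of $\Omega$ gives either $f\equiv\infty$ or $f^{-1}(\infty)=\emptyset$. In the latter case, on each compact $L\subset\Omega$ the continuous $f$ is bounded, and the identity $|h_k-f|=\chi(h_k,f)\sqrt{1+|h_k|^2}\sqrt{1+|f|^2}$ together with uniform $\chi$-convergence forces $|h_k|$ to stay bounded and then $h_k\to f$ uniformly in modulus, whence $f$ is holomorphic by Weierstrass's theorem; equivalently one may quote the cited fact that a $\chi$-uniform limit of functions holomorphic on a compactum lies in $\widetilde{A}$ and combine it with the connectedness of $\Omega$.
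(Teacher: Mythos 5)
Your proposal is correct and takes essentially the same route as the paper: the paper's proof is a one-line appeal to an exhaustion of $\Omega$ by closures of finitely-connected Jordan domains, to which Theorem \ref{ChiLimitLaurentDirichlet} is applied, with the converse resting on the cited fact that $\chi$-uniform limits of holomorphic functions lie in $\widetilde{A}(K)$ together with connectedness of $\Omega$. You merely make explicit the details the paper leaves implicit, namely the component-containment (Rudin) property of the exhaustion guaranteeing that the singularities can be taken in $A$, and the Hurwitz/connectedness argument for the converse.
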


\begin{remark}

Earlier, we made some remarks concerning uniform approximation on lines by Dirichlet polynomials. Now we consider $\chi$-uniform approximation on the real line.
Denote by $X_\chi(\R)$ the $\chi$-uniform limits on $\R$ of Dirichlet polynomials. If $f\in X_\chi(\R),$ what can we say about $f$ apart from the fact that it is $\chi$-continuous? The constant function $f\equiv \infty$ can be thus approximated by the Dirichlet polynomials $D_n(s)=n.$ Trivially, every Dirichlet polynomial can be so approximated. Are there other examples?

Consider the special case of Dirichlet series $D(s),$ which are $\chi$-uniformly convergent on $\R.$   We claim that $\Sigma(s)=\sum_{n=1}^\infty n^{-s}$ is such a series. It $\chi$-converges pointwise to the function $\Sigma(\sigma)=\zeta(\sigma),$ for $\sigma>1$ and $\Sigma(\sigma)=\infty,$ for $\sigma\le 1.$ The sum $\Sigma(\sigma)$ is $\chi$-continuous on $\R.$ We wish to show that the series converges $\chi$-uniformly. Fix $\epsilon>0$ and choose $\sigma_\epsilon>1$ such that $\chi(\Sigma(\sigma),\infty)<\epsilon,$ for $\sigma\le\sigma_\epsilon.$ Choose $N_1$ such that $\chi(\sum_{n=1}^Nn^{-\sigma_\epsilon},\infty)<\epsilon,$ for all $N\ge N_1.$
It follows that  $\chi(\sum_{n=1}^Nn^{-\sigma},\infty)<\epsilon,$ for all $\sigma\le\sigma_\epsilon$ and all $N\ge N_1.$  Now, choose $N_2$ such that $|\sum_{n=1}^Nn^{-\sigma}-\zeta(\sigma)|<\epsilon,$ for all $N\ge N_2$ and all $\sigma\ge\sigma_\epsilon.$ Now set $N_\epsilon=\max\{N_1,N_2\}.$ Then, $N\ge N_\epsilon$ implies that $\chi(\sum_{n=1}^\infty n^{-\sigma},\Sigma(\sigma))\le 2\epsilon,$ for all $\sigma\in\R.$ We have shown that the Dirichlet series $\Sigma(\sigma)$ converges $\chi$-uniformly on $\R.$ The same holds for every Dirichlet series $D(s)$ with non-negative coefficients. It converges $\chi$-uniformly on all of $\R.$ It is finite for $\sigma>\sigma_c(D)$ and it is $\infty$ for $\sigma\le\sigma_c(D).$ This holds even if $\sigma_c(D)=\pm\infty.$

\end{remark}

\end{document}